\newcommand{\cl}{\mathop{\rm cl}}
\newcommand{\fr}{\mathop{\rm fr}}
\newcommand{\inter}{\mathop{\rm Int}}
\newcommand{\dist}{\mathop{\rm dist}}
\newcommand{\mes}{\mathop{\rm mes}}
\newcommand{\Id}{\mathop{\rm Id}}
\newcommand{\contg}{\mathop{\rm contg}}
\newcommand{\capac}{\mathop{\rm capac}}
\renewcommand{\Im}{\mathop{\rm Im}}
\newtheorem{lemma}{Lemma}[section]
\newtheorem{theorem}{Theorem}[section]
\newtheorem{corollary}{Corollary}[section]
\theoremstyle{definition}
\theoremstyle{remark}
\theoremstyle{theorem}
\title{Unique Determination of Polyhedral Domains in $\mathbb R^n$
($n \ge 4$) and $p$-Moduli of Path Families\footnote{Mathematical Subject Classification
(2010). 30C65, 53C24 (primary); Key words:
$p$-modulus
of path families, boundary condenser, quasiconformal and conformal
mappings, isometric mapping, unique determination}}
\author{Anatoly~P.~Kopylov\footnote{Sobolev Institute of Mathematics,
Acad. Koptyuga pr. 4, and Novosibirsk State University, Pirogova
str., 2, 630090 Novosibirsk, Russia; apkopylov@yahoo.com}}
\date{01.01.2014}
\begin{document}

\maketitle

\begin{abstract}

This paper is an extension of the author's lecture ``Unique
Determination of Polyhedral Domains and $p$-Moduli of Path Families"
given at the International Conference ``Metric
Geometry of Surfaces and Polyhedra'' dedicated to the 100th anniversary
of Prof. Nikolay Vladimirovich Efimov,
which was held in Moscow (Russia) in August 2010 (in this
connection, see, for example,~\cite{Ko}).
We expose new results on the problem of the
unique determination of conformal type for domains in~$\mathbb R^n$.
It is in particular established that a (generally speaking) nonconvex
bounded polyhedral domain in
$\mathbb R^n$ ($n \ge 4$)
whose boundary is an
$(n - 1)$-dimensional
connected manifold of class
$C^0$
without boundary and can be represented as a finite union of
pairwise nonoverlapping
$(n-1)$-dimensional
cells is uniquely determined by the relative conformal moduli of
its boundary condensers.

Results on the unique determination (of polyhedral domains)
of isometric type are also obtained. In contrast to the
classical case, these results present a new approach in which
the notion of the $p$-modulus
of path families is used.

\end{abstract}



\section{Introduction}\label{s1}

In development of the classical topic of the unique
determination of closed convex surfaces by their intrinsic
metrics~\cite{Po}, in~\cite{Ko1}-\cite{Ko3} the author started
a search for a complete description of the boundary values of
conformal mappings of domains in the space
$\bar{\mathbb R}^n$
(by
$\bar{\mathbb R}^n$
we denote the one-point compactification
$\mathbb R^n \cup \infty$
of the real Euclidean space
$\mathbb R^n$).
This search is based on the notion of the
$n$-modulus
of a family of curves first introduced in~\cite{AB}, which plays
a very important role in various domains of mathematics. In
particular, using the notion of the modulus of a family of
curves, one can obtain the following characterization of conformal
mappings~\cite{V} (see also~\cite{Ge1},~\cite{Re}):
A homeomorphism
$f: U \to V$
of domains
$U$
and
$V$
in $\bar{\mathbb R}^n$ ($n \ge 2$)
is conformal if and only if every family of curves
$\Gamma$
in
$U$
satisfies the condition
\begin{equation}\label{eq1.1}
M_n(\Gamma') = M_n(\Gamma),
\end{equation}
where
$\Gamma' = \{f\circ\gamma: \gamma \in \Gamma\}$
(in other words, a mapping
$f$
is conformal if and only if it is
$1$-quasiconformal).

Further, suppose that the boundaries
$\fr U$
and
$\fr V$
of two domains
$U$
and
$V$
are sufficiently regular (e.g., they are bounded and are
Lipschitz manifolds of dimension
$n - 1$
without boundary). Then any quasiconformal mapping of these
domains can be extended to a homeomorphism
$H: \cl U \to \cl V$
of their closures
$\cl U$
and
$\cl V$\cite{V};
moreover, in the case of conformal mappings, the extension
$H$
satisfies~(\ref{eq1.1}) as well. In particular,~(\ref{eq1.1})
holds for the
$n$-moduli
of the families
$\Gamma$
of paths joining in
$U$
the components
$F_1$
and
$F_2$
of the boundary condensers
$F = \{F_1,F_2\}$
of the domain
$U$
(in this case,
$f = H|_{\fr U}$
in~(\ref{eq1.1})).
This gives rise to the natural question: Are domains
$U$
and
$V$
conformally equivalent if the boundary of one of them can be
mapped onto the boundary of the other by means of a homeomorphism
$f: \fr U \to \fr V$
preserving the
$n$-moduli
of the families of paths joining the components of boundary
condensers in the domain
$U$?

In~\cite{Ko1}-\cite{Ko3}, we gave a positive answer to this question in the case
of convex domains. Namely, therein, we proved the following theorem:

\begin{theorem}\label{t1.1}
If
$n \ge 4$
then any bounded convex polyhedral domain
$U \subset \mathbb R^n$
{\rm(}i.e.{\rm,} a nonempty bounded intersection of finitely
many open
$n$-dimensional
half-spaces{\rm)} is uniquely determined by the relative
conformal moduli of its boundary condensers in the class
$\mathcal P$
of all bounded convex polyhedral domains
$V \subset \mathbb R^n$.
\end{theorem}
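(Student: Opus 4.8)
The plan is to read ``unique determination'' concretely. Given another domain $V \in \mathcal P$ together with a homeomorphism $f \colon \fr U \to \fr V$ that preserves the relative conformal moduli of all boundary condensers, I must produce a conformal mapping of $\cl U$ onto $\cl V$ extending $f$. By Liouville's theorem a conformal map of a domain in $\mathbb R^n$ ($n\ge 3$) is the restriction of a M\"obius transformation, and since both $U$ and $V$ have flat $(n-1)$-dimensional faces this transformation is forced to be a similarity, so $U$ and $V$ coincide up to a rigid motion and a dilation. The strategy is therefore to extract from the modulus data enough local and combinatorial geometry of $\fr U$ to reconstruct $V$ and to recognize $f$ as the boundary trace of a similarity. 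Convexity is used throughout, both to control the tangent cones at boundary points and to invoke rigidity at the end.

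First I would analyze the local behaviour of the relative modulus near a boundary point $x_0 \in \fr U$. For a convex polyhedral domain the tangent cone at $x_0$ is a half-space when $x_0$ lies in the relative interior of an $(n-1)$-face, a product $\mathbb R^{n-2}\times W_\alpha$ of a subspace with a planar wedge of opening $\alpha$ when $x_0$ lies in the relative interior of an $(n-2)$-edge, and a more degenerate convex cone at lower-dimensional faces. The leading asymptotics of $M_n(F)$ for boundary condensers $F=\{F_1,F_2\}$ shrinking to $x_0$ are governed by this tangent cone and in particular separate flat points from singular ones. Making this precise would show that $f$ carries relative-interior points of $(n-1)$-faces to relative-interior points of $(n-1)$-faces, hence the open faces of $\fr U$ onto those of $\fr V$; by continuity and invariance of domain this yields a combinatorial isomorphism of the two face lattices compatible with $f$.

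Next I would recover the metric data. Restricting the local analysis to a single $(n-1)$-face, I would argue that the modulus condition forces $f$ to be conformal as a map between the flat $(n-1)$-dimensional faces; here the hypothesis $n\ge 4$ is decisive, because then $n-1\ge 3$ and Liouville's theorem makes every such conformal map a restriction of a similarity, whereas for $n=3$ the $2$-dimensional faces would admit a vast family of nonrigid conformal maps. Since $f$ is continuous and faces meet along $(n-2)$-edges, the similarity ratios on adjacent faces agree, yielding a single ratio throughout. Along an $(n-2)$-edge the tangent cone $\mathbb R^{n-2}\times W_\alpha$ reduces the modulus computation essentially to the planar wedge $W_\alpha$, and the relative modulus of suitably normalized condensers straddling the edge is a strictly monotone function of $\alpha$; preservation of moduli then forces corresponding dihedral angles of $U$ and $V$ to coincide.

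Finally I would assemble the global similarity. At this stage $f$ induces an isomorphism of the face lattices of the two convex polytopes that maps each face to a similar face with a common ratio and preserves all dihedral angles. By the rigidity of convex polytopes — a bounded convex polytope is determined up to isometry by its combinatorial type together with its face shapes and its dihedral angles — these data exhibit $V$ as a dilated congruent copy of $U$, and the reconstruction identifies $f$ with the boundary trace of the corresponding similarity $H$. Restriction of $H$ to $U$ then gives the required conformal map onto $V$ extending $f$, which is the asserted unique determination. The principal obstacle I anticipate is the metric step: obtaining modulus asymptotics near the singular skeleton that are sharp enough both to separate faces from edges and to invert the dependence on the dihedral angle, since these estimates demand careful control of the $n$-modulus of path families in the product cones $\mathbb R^{n-2}\times W_\alpha$.
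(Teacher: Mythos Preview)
The paper does not reprove Theorem~\ref{t1.1} here; it is quoted from \cite{Ko1}--\cite{Ko3}. The method, however, is displayed in full in the proof of Theorem~\ref{t2.1}, which explicitly follows the proof of Theorem~8.1 in \cite{Ko2} and specializes to Theorem~\ref{t1.1} when the domain is convex. So the comparison below is with that argument.

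Your overall architecture---faces go to faces, the restriction to each face is conformal and hence (by Liouville, $n-1\ge 3$) a similarity, dihedral angles are read off from wedge moduli, and these data assemble into a global similarity---matches the paper's. Two points of divergence are worth noting. First, the logical order: you propose to \emph{begin} by using modulus asymptotics to separate face points from edge points and thereby obtain the face-lattice isomorphism, and only then prove conformality on faces. The paper proceeds oppositely. It fixes a face $s\subset\fr U_1$, finds a face $\widetilde s\subset\fr U_2$ whose interior meets $f(s)$, and works on a connected component $\sigma$ of the overlap; it proves $f|_\sigma$ is conformal \emph{first}, and only then uses the wedge-modulus Lemma~\ref{l2.6} to show that $\cl\sigma=s$ (i.e., the overlap is the whole face). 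Second, your final step invokes rigidity of convex polytopes as a black box; the paper instead propagates the similarity face by face along $(n-2)$-edges by induction (again via Lemma~\ref{l2.6}), an argument that does not need convexity and is what allows Theorem~\ref{t2.1} to cover nonconvex $\mathcal P_1$-domains. For the convex statement your route is legitimate, but it does not generalize.

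The genuine gap in your proposal is the step ``the modulus condition forces $f$ to be conformal on each face.'' This is the heart of the matter and is not obtained by soft asymptotics; the paper devotes most of its work to it. Concretely, one first shows $f|_\sigma$ is \emph{quasiconformal} by bounding the linear dilatation $H(x,f)$: a Loewner-type lower bound (Lemma~\ref{l2.3}) for $M^{\mathbb R^n_+}(\{f^{-1}(E_1),f^{-1}(E_2)\})$ is played against the trivial upper bound $nv_n(\log(L/l))^{1-n}$ for $M^{U_2}(\{E_1,E_2\})$, yielding $H(x,f)\le\Lambda_n$. Upgrading from quasiconformal to conformal then requires a second, more delicate device: the one-parameter family of segment condensers $A_t$ in $\fr\mathbb R^n_+$ (Lemma~\ref{l2.4}) has a \emph{growth point} $t_n$, and a blow-up argument at a nondegenerate differentiability point, combined with the continuity-of-moduli Lemma~\ref{l2.5}, shows that if $f'(x_0)$ were not conformal one would get $M^{\mathbb R^n_+}(A_{ut_n})\le M^{\mathbb R^n_+}(A_{t_n})$ with $u>1$, contradicting the growth-point property. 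None of this machinery is visible in your outline; you have correctly located the obstacle in your last paragraph, but the resolution is specific and not a matter of generic ``sharp asymptotics.'' Your dihedral-angle step, by contrast, is exactly Lemma~\ref{l2.6}: $M^{V_\alpha}(A)=(\alpha/\pi)M^{V_\pi}(A)$, strictly monotone in $\alpha$, so that part is fine.
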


This paper continues the study of unique determination of
conformal type initiated in~\cite{Ko1}-\cite{Ko3} by
Theorem~\ref{t1.1}. First, we briefly recall notions
from~\cite{Ko1}-\cite{Ko3} used in Theorem~\ref{t1.1} that we will
need in the sequel.

Let
$U \subset \mathbb R^n$
($U \ne \mathbb R^n$)
be a domain in~$\mathbb R^n$
for which
$\fr U$
is a Lipschitz
$(n - 1)$-manifold
without boundary. A boundary condenser
$F = \{F_1,F_2\}$
of~$U$
is a pair of disjoint closed subsets
$F_1$
and
$F_2$
of the boundary
$\fr U$
of this domain (at least one of which is bounded). A relative
conformal modulus
$M^U(F)$
of a boundary condenser
$F$
of the domain
$U$
is by definition the $n$-modulus
\begin{equation}\label{eq1.2}
M_n(\Gamma_{F_1,F_2,U}) = \inf_{\rho \in \mathcal R(\Gamma_{F_1,F_2,U})}
\int_{\mathbb R^n}[\rho(x)]^n dx
\end{equation}
of the family
$\Gamma_{F_1,F_2,U}$
of all continuous paths
$\gamma: [0,1] \to \cl U$,
where
$\cl U$
denotes the closure of~$U$,
such that
$\gamma(0) \in F_1$,
$\gamma(1) \in F_2$,
and
$\gamma(t) \in U$
for
$0 < t < 1$
(in~(\ref{eq1.2}),
$\mathcal R(\Gamma_{F_1,F_2,U})$
is the set of all nonnegative Borel measurable functions
$\rho: \mathbb R^n \to \dot{\mathbb R}$,
where
$\dot{\mathbb R} = \mathbb R \cup \{-\infty,\infty\}$
is the two-point compactification of the real line
$\mathbb R = \mathbb R^1$,
satisfying the condition
$\int_{\gamma}\rho ds \ge 1$
for every rectifiable path
$\gamma \in \Gamma_{F_1,F_2,U}$).

Let
$\mathcal L_0 = \mathcal L_0(n)$
be a subclass in the class
$\mathcal L = \mathcal L(n)$
of all domains
$U$
in~$\mathbb R^n$
with
$n \ge 3$
different from
$\mathbb R^n$
and such that the boundary of each of these domains is a
Lipschitz
$(n - 1)$-manifold
without boundary. Following~\cite{Ko1}-\cite{Ko3}, we say that a
domain
$U \in \mathcal L_0$
is {\it uniquely determined by the relative conformal moduli of its
boundary condensers in the class
$\mathcal L_0$}
if the following conditions hold: Suppose that
$V \in \mathcal L_0$
and there exists a homeomorphism
$f: \fr V \to \fr U$
of the boundary
$\fr V$
of the domain
$V$
to the boundary
$\fr U$
of~$U$
preserving the relative conformal moduli of the boundary
condensers, i.e., such that
$M^V(F) = M^U(f(F))$
(where
$f(F) = \{f(F_1),f(F_2)\}$)
for each boundary condenser
$F$
of~$V$.
Then~$V$ can be mapped conformally onto~$U$.

In connection with Theorem~\ref{t1.1}, there arises the question of
whether the convexity condition in its statement is substantial.
The main results of this paper are Theorems~\ref{t2.1} and~\ref{t2.2}
below, which make it possible to waive the convexity condition
in Theorem~\ref{t1.1}.

The second part of the article is devoted to a complete description
of the boundary values of isometric mappings of $n$-dimensional
domains in terms of the $p$-moduli of path families. In this
connection, we briefly recall now some facts of the theory of
quasi-isometric mappings that we will need below.

\textbf{Definition~1.1.}
Let
$K \in [1,\infty[$.
A homeomorphism
$f: U_1 \to U_2$
of domains
$U_1$
and
$U_2$
in~$\mathbb R^n$
is called $K$-quasi-isometric if
$$
K^{-1} \le \liminf_{y \to x}\frac{|f(y) - f(x)|}{|y - x|} \le
\limsup_{y \to x}\frac{|f(y) - f(x)|}{|y - x|} \le K
$$
for any $x \in U_1$. A homeomorphism
$f: U_1 \to U_2$
is called quasi-isometric if it is
$K$-quasi-isometric for some
$K \in [1,\infty[$.

We have

\begin{theorem}\label{t1.2}
Suppose that
$f: U_1 \to U_2$
is a
$K$-quasi-isometric
homeomorphism of bounded domains
$U_1$
and
$U_2$
in
$\mathbb R^n,$
where
$n \ge 2$
$(1 \le K < \infty)$.
Then
\begin{equation}\label{eq1.3}
K^{2 - p - n} M_p(\Gamma) \le M_p(f(\Gamma)) \le
K^{p + n -2} M_p(\Gamma).
\end{equation}
for every
$p \in ]1,\infty[$
and any family
$\Gamma$
of paths
$\gamma$
such that
$\Im \gamma \subset \cl U_1$.
\end{theorem}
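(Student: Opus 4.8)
The plan is to derive all of~\eqref{eq1.3} from a single one-sided estimate,
\[
M_p(f(\Gamma)) \le K^{p+n-2}\,M_p(\Gamma),
\]
applied both to $f$ and to its inverse. A homeomorphism satisfying Definition~1.1 has the property that $g = f^{-1}: U_2 \to U_1$ is again $K$-quasi-isometric, since the lower and upper stretchings of $g$ at $f(x)$ are the reciprocals of the upper and lower stretchings of $f$ at $x$ and hence again lie in $[K^{-1},K]$. Because $g(f(\Gamma)) = \Gamma$, the displayed estimate applied to $g$ and the family $f(\Gamma)$ gives $M_p(\Gamma) \le K^{p+n-2}\,M_p(f(\Gamma))$, which is exactly the left inequality in~\eqref{eq1.3}. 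Thus it suffices to prove the right inequality. First I would record the regularity of $f$: the bound $\limsup_{y\to x}|f(y)-f(x)|/|y-x| \le K$ at every interior point makes $f$ locally Lipschitz, and together with the analogous bound for $g$ it shows that $f$ extends to a bi-Lipschitz homeomorphism $\cl U_1 \to \cl U_2$. Consequently $f$ is differentiable $\mathcal L^n$-almost everywhere, satisfies Lusin's condition~(N), and the change-of-variables (area) formula is valid. At a.e.\ $x$ the derivative $f'(x)$ has singular values $\sigma_1(x),\dots,\sigma_n(x) \in [K^{-1},K]$; writing $l(x)=\min_i\sigma_i(x)$ for the minimal stretching, the Jacobian satisfies $|J_f(x)|=\prod_i\sigma_i(x)$.

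Next I would construct a competitor. Given $\rho$ admissible for $\Gamma$, define
\[
\rho^*(y) = \frac{\rho(g(y))}{l(g(y))}\qquad (y \in U_2),
\]
extended by $0$ outside $U_2$. For a rectifiable path $\gamma^* = f\circ\gamma \in f(\Gamma)$ the infinitesimal length inequality $ds^* \ge l(\gamma)\,ds$ along $\gamma$ yields
\[
\int_{\gamma^*}\rho^*\,ds^* \ge \int_\gamma \frac{\rho}{l}\,l\,ds = \int_\gamma \rho\,ds \ge 1,
\]
so $\rho^*$ is admissible for $f(\Gamma)$. Applying the area formula with $y=f(x)$ and then the pointwise estimate
\[
\frac{|J_f(x)|}{l(x)^p} = l(x)^{1-p}\prod_{i\ne i_0}\sigma_i(x) \le K^{p-1}\,K^{n-1} = K^{p+n-2},
\]
where $i_0$ is an index with $\sigma_{i_0}(x)=l(x)$ and the inequality uses $1-p<0$, $l\ge K^{-1}$, and $\sigma_i\le K$, I obtain
\[
\int_{\mathbb R^n}[\rho^*(y)]^p\,dy = \int_{U_1}[\rho(x)]^p\,\frac{|J_f(x)|}{l(x)^p}\,dx \le K^{p+n-2}\int_{\mathbb R^n}[\rho(x)]^p\,dx.
\]
Taking the infimum over admissible $\rho$ gives the right inequality in~\eqref{eq1.3}.

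The main obstacle is measure-theoretic rather than combinatorial. One must justify, for \emph{all} rectifiable paths $\gamma^*$ and not merely for almost every one, both the length inequality $ds^*\ge l\,ds$ and the resulting admissibility of $\rho^*$, despite the fact that $f\circ\gamma$ need not be absolutely continuous along the exceptional set where $f$ fails to be differentiable. This is dispatched by the standard path-family machinery: the family of paths possessing a rectifiable subpath that lies in the ($\mathcal L^n$-null) non-differentiability set of $f$, or along which absolute continuity fails, has $p$-modulus zero and may be discarded without changing $M_p(f(\Gamma))$. The boundary case (paths meeting $\fr U_1$) is handled by working throughout with the bi-Lipschitz extension to the closures and noting that $\fr U_1$ is $\mathcal L^n$-null. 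Finally, I would emphasize that using the \emph{minimal} stretching $l(x)$ in the denominator of $\rho^*$, rather than the crude constant $K$, is precisely what improves the naive exponent $p+n$ to the sharp $p+n-2$; this refinement, encoded in the singular-value inequality above, is the quantitative heart of the proof.
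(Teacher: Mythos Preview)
Your argument is correct and follows essentially the same route as the paper: discard the $p$-modulus-zero family of exceptional paths via Fuglede's $ACL_p$ machinery, transport an admissible density through $f$ using the pointwise derivative, bound the distortion factor by $K^{p+n-2}$ via the singular-value inequality, and recover the second inequality from the first by applying the same estimate to $f^{-1}$. The only cosmetic difference is a duality: the paper pulls back an admissible $\tilde\rho$ for $f(\Gamma)$ to $\rho(x)=\tilde\rho(f(x))\,\|f'(x)\|$ and uses $\|f'\|^p/|J_f|\le K^{p+n-2}$ to prove the \emph{left} inequality first, whereas you push forward $\rho$ to $\rho^*(y)=\rho(g(y))/l(g(y))$ and use $|J_f|/l^p\le K^{p+n-2}$ to prove the \emph{right} inequality first---but since $1/l_f(x)=\|g'(f(x))\|$, your $\rho^*$ is exactly the paper's pullback applied to $g=f^{-1}$, so the two computations are the same argument with $f$ and $f^{-1}$ interchanged.
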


\textbf{Remark~1.1.}
The quantity
$M_p(\Gamma)$,
where
$1 \le p < \infty$,
is called the
$p$-modulus
of the path family
$\Gamma$
and defined by analogy with the conformal modulus
$M_n(\Gamma)$
as
\begin{equation}\label{eq1.4}
M_p(\Gamma) = \inf_{\rho \in \mathcal R(\Gamma)}
\int_{\mathbb R^n} [\rho(x)]^p dx,
\end{equation}
where
$\mathcal R(\Gamma)$
is the set of all nonnegative Borel measurable functions
$\rho: \mathbb R^n \to \dot{\mathbb R}$
such that $\int_{\gamma} \rho ds \ge 1$
for every rectifiable path
$\gamma \in \Gamma$.

It is also well known that if the boundaries of domains
$U_1$
and
$U_2$
are sufficiently regular (e.g., these domains belong to the
class
$\mathcal L$),
then any
$K$-quasi-isometric
homeomorphism of these domains admits a natural extension to a
$K$-quasi-isometric
homeomorphism
$H$
of their closures
$\cl U_1$
and
$\cl U_2$
satisfying condition~(\ref{eq1.3}). In particular,~(\ref{eq1.3})
holds for the
$p$-moduli
$M^{U_1}_p(F) = M^{U_1}_p(\{F_1,F_2\}) = M_p(\Gamma_{F_1,F_2,U_1})$
of the boundary condensers
$F = \{F_1,F_2\}$
of the domain
$U_1$,
and if
$K = 1$
then inequalities~(\ref{eq1.3}) turn into the equality
$$
M^{U_2}_p(f(F)) = M^{U_1}_p(F)
$$
(in this case, the mapping
$f$
in~(\ref{eq1.3}) coincides with the restriction
$H|_{\fr U}$
of~$H$
to the boundary of
$U$).

These facts and Theorems~\ref{t2.1} and~\ref{t2.2} lead to the
following question: Do there exist analogs of
Theorems~\ref{t2.1} and~\ref{t2.2} characterizing the boundary
values of isometric mappings in terms of the
$p$-moduli
of path families? In Sec.~3, we answer this question in the positive.

In the Appendix, for the reader's convenience, we expose the proof
of Theorem~\ref{t1.2}

In what follows, for
$x \in \mathbb R^n$
and
$E \subset \mathbb R^n$,
$\dist(x,E) = \inf\limits_{y \in E} |x - y|$, all paths
$\gamma: [\alpha,\beta] \to \mathbb R^n$,
where
$\alpha,\beta \in \mathbb R$,
are assumed continuous and non-constant, and
$l(\gamma)$
means the length of a path
$\gamma$.

\section{Unique Determination of Nonconvex Polyhedral Domains by
the Relative Conformal Moduli of Their Boundary Condensers}\label{s2}

Let
$\mathcal P_1 = \mathcal P_1(n)$
be the class of all bounded domains
$U$
in
$\mathbb R^n$
satisfying the following conditions:

$(i)$
$\fr U$
is an
$(n - 1)$-manifold
of class
$C^0$
without boundary;

$(ii)$
$\fr U$
can be represented as a finite union of pairwise nonoverlapping
$(n - 1)$-dimensional
cells.

\textbf{Remark~2.1.}
Recall (see, e.g.,~\cite{W}) that a cell
$\sigma$
in
$\mathbb R^n$
is a nonempty closed bounded subset in~$\mathbb R^n$
which can be represented as a finite intersection of closed
half-spaces. The plane
$\Omega(\sigma)$
of a cell
$\sigma$
is the minimal affine subspace containing
$\sigma$;
the dimension
$\dim(\sigma)$
of a cell
$\sigma$
coincides with that of the plane
$\Omega(\sigma)$,
and if this dimension equals~$r$ then
$\sigma$
is called an
$r$-cell.
If the dimensions of two cells
$\sigma_1$
and
$\sigma_2$
coincide and
$\{\inter(\sigma_1)\} \cap \{\inter(\sigma_2)\} = \varnothing$
then we say that the cells
$\sigma_1$
and
$\sigma_2$
do not overlap. Here
$\inter(\sigma_j) = \sigma_j \setminus \partial \sigma_j$
is the open kernel of the cell
$\sigma_j$
($j = 1,2$),
and
$\partial \sigma_j$
denotes the boundary of the cell
$\sigma_j$
treated as a subset of the plane
$\Omega(\sigma_j)$.

The first main result of the article is the following

\begin{theorem}\label{t2.1}
Suppose that
$n \ge 4$.
Then every domain~$U$
in~$\mathbb R^n$
belonging to the class
$\mathcal P_1$
and having connected boundary is uniquely determined in this
class by the relative conformal moduli of its boundary
condensers. Moreover{\rm,}
$U$
can be determined in the class
$\mathcal P_1$
up to an additional affine conformal transformation {\rm(}i.e.{\rm,}
a similarity transformation{\rm)}
$P: \mathbb R^n \to \mathbb R^n$.
\end{theorem}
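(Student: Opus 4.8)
The plan is to extract from the modulus-preserving homeomorphism $f \colon \fr V \to \fr U$ enough local and global rigidity to produce a single similarity $P \colon \mathbb R^n \to \mathbb R^n$ with $P(\cl V) = \cl U$; the conformal equivalence of $V$ and $U$ then follows a fortiori. The first step is to localize the relative conformal moduli. For a boundary point $x$ and small radii $r_i$, I would study boundary condensers $F = \{F_1,F_2\}$ whose components are small closed pieces of $\fr U$ near $x$ (e.g.\ caps $\fr U \cap \partial B(x,r_i)$, or pieces straddling $x$), and analyze the asymptotics of $M^U(F)$ as the radii shrink. These localized moduli recognize the local geometric type of $x$: whether $x$ lies in the open kernel $\inter\sigma$ of an $(n-1)$-cell $\sigma$, where $\fr U$ is a flat patch of the hyperplane $\Omega(\sigma)$ and $U$ is locally a half-space, or in the relative interior of an $(n-2)$-face, where two cells meet at a well-defined dihedral angle and $U$ is locally a wedge, the limiting modulus being a strictly monotone function of that angle. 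Since $f$ preserves all such moduli, it must carry the flat stratum of $\fr V$ onto the flat stratum of $\fr U$, the ridge stratum onto the ridge stratum, and preserve the dihedral angle along each ridge.

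The second step is the key rigidity on a single face. Fix an $(n-1)$-cell $\sigma$ of $\fr V$; by Step~1 its open kernel is mapped by $f$ into the flat stratum of $\fr U$. Testing modulus-preservation against boundary condensers whose two components both lie in $\inter\sigma$ (with connecting paths running through $U$ close to $\sigma$) reduces the equality $M^V(F) = M^U(f(F))$ to an equality of ordinary conformal moduli of condensers in the flat $(n-1)$-dimensional patches. By the characterization~(\ref{eq1.1}) this forces $f|_{\inter\sigma}$ to be $1$-quasiconformal, hence conformal, as a map between $(n-1)$-dimensional flat pieces. Here the hypothesis $n \ge 4$ is indispensable: the boundary is $(n-1)$-dimensional with $n-1 \ge 3$, so by Liouville's theorem a conformal map of an open piece of an $(n-1)$-plane is the restriction of a M\"obius transformation. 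Since $f(\inter\sigma)$ again lies in a hyperplane (flatness is preserved by Step~1), the only admissible M\"obius maps are the affine conformal ones, so $f|_{\inter\sigma}$ extends to a similarity $P_\sigma$ of $\mathbb R^n$ carrying $\Omega(\sigma)$ onto the hyperplane of $f(\sigma)$.

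The third step glues the face-wise similarities into one global map. If cells $\sigma_1,\sigma_2$ of $\fr V$ share an $(n-2)$-face $\tau$, then by continuity $P_{\sigma_1}$ and $P_{\sigma_2}$ both agree with $f$ on $\tau$, hence coincide on an $(n-2)$-plane $\Omega(\tau)$. Their composite $P_{\sigma_2}^{-1}\circ P_{\sigma_1}$ is then a similarity fixing $\Omega(\tau)$ pointwise, so it is an isometry that is either the identity or a reflection in the orthogonal $2$-plane; the preservation of the dihedral angle (Step~1) together with the consistent orientation toward the interior of $V$ rules out the reflection, giving $P_{\sigma_1} = P_{\sigma_2}$. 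Because $\fr V$ is a connected $(n-1)$-manifold presented as a finite non-overlapping union of cells, the adjacency graph (cells as vertices, shared $(n-2)$-faces as edges) is connected, so one similarity $P$ equals every $P_\sigma$ and agrees with $f$ on the dense flat stratum; by continuity $P|_{\fr V} = f$ and $P(\fr V) = \fr U$. As $P$ is a homeomorphism of $\mathbb R^n$ matching the boundaries of the bounded domains $V$ and $U$, it maps $\cl V$ onto $\cl U$ and (by Jordan--Brouwer separation) $V$ onto $U$, which is the asserted determination up to a similarity, and in particular a conformal equivalence.

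I expect the principal obstacle to be Step~1 together with the first half of Step~2: rigorously computing the limiting behavior of relative conformal moduli of shrinking boundary condensers so that they genuinely distinguish flat points from ridge points and recover the dihedral angle as a monotone modulus-invariant, and then converting the global modulus equality into a pointwise conformality statement on each face. The comparison with explicit model moduli for a half-space boundary and for a dihedral wedge, and the control of the error terms coming from the non-flat parts of $\fr U$ lying away from the probe, is the delicate technical core; the dimension restriction $n \ge 4$ matters precisely because it is what upgrades face-wise conformality to face-wise M\"obius (hence affine) rigidity via Liouville's theorem, an upgrade that fails for $n-1 < 3$.
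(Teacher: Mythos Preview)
Your architecture is essentially the paper's: local face-wise conformality, Liouville in dimension $n-1\ge 3$, dihedral-angle matching, and gluing along the adjacency graph of faces. You also correctly flag Step~1 and the first half of Step~2 as the delicate core. Two points, however, are genuine gaps rather than mere technicalities.

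First, the ``reduction'' in Step~2 from $M^U(F)=M^V(f(F))$ (an $n$-modulus of paths in the solid domain) to an equality of $(n-1)$-dimensional moduli on the face does not exist in the form you suggest; there is no formula expressing $M^{\mathbb R^n_+}(\{F_1,F_2\})$ for $F_1,F_2\subset\tau_{n-1}$ as an $(n-1)$-modulus on $\tau_{n-1}$, so characterization~(\ref{eq1.1}) in dimension $n-1$ is not directly available. The paper proceeds differently and in the opposite order from your Steps~1--2: it first proves that $f$ restricted to a connected piece $\sigma$ of a face is \emph{quasiconformal} with an explicit dilatation bound $\Lambda_n$ (via a Loewner-type lower estimate, Lemma~\ref{l2.3}), and then upgrades to \emph{conformality} by testing against a single one-parameter family of condensers $A_t$ (two parallel segments, Lemma~\ref{l2.4}) and exploiting a strict growth point of $t\mapsto M^{\mathbb R^n_+}(A_t)$; only \emph{after} conformality is in hand does it prove the stratum statement $\cl\sigma=s$ (your Step~1) using the wedge modulus Lemma~\ref{l2.6}.

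Second, your inference ``$f(\inter\sigma)$ lies in a hyperplane, hence the M\"obius map is affine'' is false: any M\"obius map of $\bar{\mathbb R}^{n-1}$ whose pole lies outside $\inter\sigma$ sends $\inter\sigma$ into a hyperplane, so planarity of the image puts no constraint. The paper does \emph{not} obtain a similarity face by face. It obtains only a M\"obius map on each face, and the gluing in your Step~3 then yields a single global M\"obius transformation $\widetilde H$ of $\bar{\mathbb R}^n$ with $\widetilde H|_{\fr U_1}=f$; the conclusion that $\widetilde H$ is a similarity comes only at the very end, from the observation that a non-affine M\"obius map would send some flat face of $\fr U_1$ onto a spherical cap, contradicting $U_2\in\mathcal P_1$. (Relatedly, in Step~3 a similarity fixing an $(n-2)$-plane pointwise is an isometry, but it can be any rotation in the orthogonal $2$-plane, not merely the identity or a reflection; the dihedral-angle and side information is what pins down the rotation.)
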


\textbf{Remark~2.2.}
It should be mentioned in relation to Theorem~\ref{t2.1} that
the boundary of any domain of class
$\mathcal P_1$
is a Lipschitz
($n - 1$)-manifold
without boundary (this follows directly from the definition of
the class
$\mathcal P_1$).

To prove Theorem~\ref{t2.1}, we first need to introduce a number
of notions and remind some assertions of auxiliary nature
from author's article~\cite{Ko2}.

We will begin with the notion of an
($n - 1$)-face
of the boundary of a domain
$U$
from the class
$\mathcal P_1$.

Let
$\Xi$
be a collection of pairwise nonoverlapping cells of dimension
$n - 1$
whose union coincides with the boundary of the domain
$U$,
and suppose that a hyperplane
$\tau$
contains at least one cell from
$\Xi$.
We say that
$\delta$
is an
($n - 1$)-face
of the boundary
$\fr U$
of
$U$
contained in
$\tau$
if it is a maximal union
\begin{equation}\label{eq3.1}
\delta = \bigcup\limits_{s = 1}^k \xi_s
\end{equation}
of those cells in~$\Xi$
that
$(i)$
are contained in~$\tau$,
moreover,
$(ii)$
the interior
$\inter_{\fr U} \delta$
of this union (calculated in the interior metric of the boundary
$\fr U$
of
$U$)
is a connected set.
The maximality of the union~(\ref{eq3.1}) means that it is
impossible to add (at least) one more cell from
$\Xi$
to this union with the preservation of properties
$(i)$
and
$(ii)$.
Clearly, the just-introduced definition of
($n - 1$)-face
of the boundary
$\fr U$
is correct and
$\fr U$
itself is a (uniquely defined) union of pairwise nonoverlapping
($n - 1$)-faces.
Moreover, since the boundary of
$U$
is an
($n - 1$)-dimensional
manifold of class
$C^0$
without boundary, we have the following assertions:

\begin{lemma}\label{l2.1}
If
$\delta$
is
an $(n - 1)$-face
of the boundary of
$U$
contained in a hyperplane
$\tau$
then the set
$\inter \delta\,\, (= \inter_{\fr U} \delta)$
has an open neighborhood
$D$
such that
$D \cap \mathbb R^n_1 \subset U$
and
$D \cap \mathbb R^n_2 \subset \mathbb R^n \setminus U,$
where
$\mathbb R^n_1$
and
$\mathbb R^n_2$
are open half-spaces satisfying the conditions
$\mathbb R^n_1 \cap \mathbb R^n_2 = \varnothing$
and
$\mathbb R^n_1 \cup \mathbb R^n_2 = \mathbb R^n \setminus \tau$.
\end{lemma}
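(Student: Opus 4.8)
The plan is to establish the conclusion pointwise and then globalize along the face using the connectedness of $\inter_{\fr U}\delta$. First I would fix a point $x\in\inter_{\fr U}\delta$. Since $\inter_{\fr U}\delta$ is open in $\fr U$ and $\fr U$ is closed in $\mathbb R^n$, the set $\fr U\setminus\inter_{\fr U}\delta$ is closed and does not contain $x$, so there is $r=r_x>0$ with $B(x,r)\cap\fr U\subseteq\inter_{\fr U}\delta\subseteq\delta\subseteq\tau$. Consequently the two open half-balls $H_1=B(x,r)\cap\mathbb R^n_1$ and $H_2=B(x,r)\cap\mathbb R^n_2$ are connected and disjoint from $\fr U$, whence each of them lies entirely in $U$ or entirely in the exterior $\mathbb R^n\setminus\cl U$. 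Moreover, since $x\in\fr U$, every neighbourhood of $x$ meets $U$, and such points cannot lie on $\tau$ (because $\tau\cap B(x,r)\subseteq\fr U$ is disjoint from $U$); therefore at least one of $H_1,H_2$ is contained in $U$.

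The heart of the proof is to exclude the possibility that \emph{both} half-balls lie in $U$ (a ``slit'' along $\delta$); this is precisely where the hypothesis that $\fr U$ is a manifold \emph{without boundary} is indispensable, and no purely local or cell-combinatorial argument suffices. Here I would invoke the generalized Jordan--Brouwer separation theorem. Since $\delta$ is connected it lies in a single connected component $M$ of $\fr U$, and $M$ is a compact connected $(n-1)$-manifold without boundary which, by Remark~2.2, is Lipschitz and hence locally flat. Therefore $M$ separates $\mathbb R^n$ into two domains $A$ and $B$ with $\partial A=\partial B=M$. Because $x\in M=\partial A\cap\partial B$, points of $A$ and points of $B$ accumulate at $x$; these points lie in $H_1\cup H_2$, and each half-ball, being connected in $\mathbb R^n\setminus M=A\sqcup B$, lies in a single one of $A,B$, so one half-ball lies in $A$ and the other in $B$. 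Now $U$ is connected and disjoint from $M$, whence $U\subseteq A$ or $U\subseteq B$; if both half-balls were contained in $U$, then $U$ would meet both $A$ and $B$, a contradiction. Thus exactly one of $H_1,H_2$ lies in $U$ and the other in $\mathbb R^n\setminus\cl U\subseteq\mathbb R^n\setminus U$.

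Finally I would globalize. The assignment sending $x$ to the half-space ($\mathbb R^n_1$ or $\mathbb R^n_2$) on whose side $U$ locally lies is locally constant on $\inter_{\fr U}\delta$, and by property~$(ii)$ of the definition of a face the set $\inter_{\fr U}\delta$ is connected; hence this assignment is constant. Relabelling if necessary so that $U$ lies on the $\mathbb R^n_1$-side at every such $x$, I would put $D=\bigcup_{x\in\inter_{\fr U}\delta}B(x,r_x)$, an open neighbourhood of $\inter\delta$. Every point of $D\cap\mathbb R^n_1$ lies in some half-ball $B(x,r_x)\cap\mathbb R^n_1\subseteq U$, and every point of $D\cap\mathbb R^n_2$ lies in some $B(x,r_x)\cap\mathbb R^n_2\subseteq\mathbb R^n\setminus U$, which yields the asserted inclusions. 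The only delicate step is the separation argument of the second paragraph; once the correct radius $r_x$ is chosen, the remainder is routine.
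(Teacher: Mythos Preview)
The paper does not actually prove this lemma: Remark~2.3 reads ``The proofs of Lemmas~\ref{l2.1} and~\ref{l2.2} are rather simple. For this reason, we omit them.'' So there is no argument in the paper to compare yours against; one can only assess your proposal on its own merits.

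Your argument is correct. The local step and the globalization by connectedness are routine; the real content is, as you identify, excluding the ``slit'' configuration in which both half-balls lie in $U$, and your appeal to Jordan--Brouwer separation for the connected component $M$ of $\fr U$ containing $x$ handles this cleanly. Two small refinements are worth noting. First, you can sharpen the choice of $r_x$ so that in addition $B(x,r_x)\cap\tau\subseteq\fr U$: since $\inter_{\fr U}\delta$ is open in the $(n-1)$-manifold $\fr U$ and is contained in the hyperplane $\tau\cong\mathbb R^{n-1}$, invariance of domain makes $\inter_{\fr U}\delta$ open in $\tau$, so such $r_x$ exist. With this choice one has $B(x,r_x)\cap M=B(x,r_x)\cap\tau$, and your assertion that the points of $A$ and $B$ accumulating at $x$ ``lie in $H_1\cup H_2$'' becomes literally true rather than merely true after a small perturbation. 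Second, the clause ``Lipschitz and hence locally flat'' is correct but unnecessary for the logic: Alexander duality with $\mathbb Z/2$ coefficients already yields exactly two complementary components for any topologically embedded compact connected hypersurface, and the fact that both components accumulate at $x$ follows from the explicit flatness of $M\cap B(x,r_x)$ that you have just arranged. With these cosmetic adjustments the proof is complete, and it is presumably close in spirit to what the author regarded as ``rather simple.''
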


\begin{lemma}\label{l2.2}
Let
$\{\delta_j\}_{j = 1,\dots,k}$
be a proper subset of the set of all
$(n - 1)$-faces
of the boundary
$\fr U$.
Then the boundary
$\varkappa = \fr_{\fr U} \bigl(\bigcup\limits_{j = 1}^l \delta_j\bigr)$
of the union
$\bigcup\limits_{j = 1}^l \delta_j$
with respect to
$\fr U$
is a nonempty subset of the union of boundaries of the cells
$\xi \in \Xi$
and so
$\varkappa$
is the union of a finite set
$\Theta = \Theta(\varkappa)$
of pairwise nonoverlapping
$(n - 2)$-dimensional
cells
$v$.

Furthermore, if $x \in \inter v$ $(v \in \Theta)$
then the contingency
$\contg_U x$
of~$U$
at~$x$
is the set
$\widetilde{V}_{\alpha} = \cl(P(V_{\alpha})),$
where
$P$
is a similarity transformation and
\begin{multline}\label{eq3.2}
V_{\alpha} = \{x = (x_1,x_2,\dots,x_{n - 2},x_{n - 1},x_n) \in
\mathbb R^n: x_j \in \mathbb R, j = 1,2,\dots,n - 2,
\\
x_{n - 1} = r\cos\theta, x_n = r\sin\theta, 0 < r < \infty,
0 < \theta < \alpha\},
\end{multline}
$0 < \alpha < 2\pi,$
$\alpha \ne \pi,$
moreover{\rm,} there exists a number
$r = r_x > 0$
such that
$B(x,r) \cap U = B(x,r) \cap \widetilde{V}_{\alpha}$.
\end{lemma}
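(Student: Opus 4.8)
The plan is to treat the two assertions separately: the first is topological–combinatorial (locating $\varkappa$ inside the $(n-2)$-skeleton), while the second is a local geometric computation of the contingency based on Lemma~\ref{l2.1}.

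For the first assertion, write $W=\bigcup_j\delta_j$ for the union of the chosen faces; it is closed, and since $\{\delta_j\}$ is a nonempty proper subfamily of the set of all faces, $W\ne\varnothing$ and $W\ne\fr U$. Because $\fr U$ is connected, $W$ cannot be simultaneously open and closed, so its relative frontier $\varkappa=\fr_{\fr U}W$ is nonempty. To show $\varkappa\subseteq\bigcup_{\xi\in\Xi}\partial\xi$, I would check that no interior point of a face can lie in $\varkappa$: by Lemma~\ref{l2.1}, at an interior point $y$ of a face $\delta$ the set $\fr U$ coincides, in a neighborhood, with a piece of the hyperplane carrying $\delta$, whence $y$ belongs to $\delta$ and to no other face, so a whole $\fr U$-neighborhood of $y$ lies either inside $W$ (if $\delta$ is chosen) or outside $W$ (if not); in either case $y\notin\varkappa$. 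Thus $\varkappa$ avoids every $\inter_{\fr U}\delta$ and is therefore contained in the union of the face boundaries, hence in $\bigcup_\xi\partial\xi$, a finite union of cells of dimension $\le n-2$. Finally, because a subset of dimension $\le n-3$ cannot locally separate the $(n-1)$-manifold $\fr U$ whereas $\varkappa$ does separate the $(n-1)$-dimensional region $W$ from its complement, every point of $\varkappa$ must lie in an $(n-2)$-dimensional portion of this skeleton; so $\varkappa$ is a finite union of pairwise nonoverlapping $(n-2)$-cells, which I collect into $\Theta$.

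For the second assertion, fix $x\in\inter v$ with $v\in\Theta$ and let $\Omega(v)$ be the $(n-2)$-plane of $v$. Since $x$ lies in the relative interior of $v$, I can choose $r_x>0$ so small that $B(x,r_x)$ meets only those cells of $\Xi$ whose closure contains $x$ and meets none of the $(n-3)$- and lower-dimensional cells of the complex; as each relevant $(n-1)$-cell is flat (a piece of its hyperplane), within $B(x,r_x)$ the set $\fr U$ is exactly a union of finitely many half-hyperplanes (``pages'') sharing the common $(n-2)$-dimensional edge $\Omega(v)$. The decisive point is that $\fr U$ is an $(n-1)$-manifold of class $C^0$ \emph{without boundary}: the cross-section of this local picture by the $2$-plane through $x$ orthogonal to $\Omega(v)$ is a finite union of rays issuing from the origin, and it must be a $1$-manifold near the origin, i.e. an arc, which forces the number of pages to be exactly two. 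These two half-hyperplanes cannot lie in one and the same hyperplane, for otherwise $\fr U$ would be locally flat at $x$ and, by the maximality in the definition of a face, $x$ would be an interior point of a single face, contradicting $x\in\varkappa$; hence the two pages span a genuine dihedral angle with $0<\alpha<2\pi$ and $\alpha\ne\pi$. By Lemma~\ref{l2.1}, $U$ lies along each page on a definite side, so inside $B(x,r_x)$ the domain $U$ fills exactly the open wedge bounded by the two pages. Choosing a similarity $P$ that carries the standard configuration $V_\alpha$ of~(\ref{eq3.2}) onto this wedge gives $B(x,r_x)\cap U=B(x,r_x)\cap\widetilde V_\alpha$ with $\widetilde V_\alpha=\cl(P(V_\alpha))$; since a wedge is its own tangent cone, this local coincidence yields $\contg_U x=\widetilde V_\alpha$ at once.

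The main obstacle I anticipate is the rigorous passage from the topological hypothesis ``$\fr U$ is a $C^0$-manifold without boundary'' to the two geometric conclusions of the second assertion: that at most two faces can abut along an $(n-2)$-cell (ruling out the non-manifold ``book with $\ge 3$ pages''), and that the coincidence of $U$ with the wedge is \emph{exact} on $B(x,r_x)$ rather than merely infinitesimal. Both require a careful local analysis confirming that near an interior point of an $(n-2)$-cell the only strata of $\fr U$ that intervene are the two adjacent $(n-1)$-faces, together with the observation that these faces, being subsets of hyperplanes, are genuinely flat, so that the wedge description holds on a full ball and not only in the limit.
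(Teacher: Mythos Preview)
The paper does not actually prove this lemma: Remark~2.3, immediately following the statement, says that ``the proofs of Lemmas~\ref{l2.1} and~\ref{l2.2} are rather simple'' and omits them. There is therefore no argument in the paper to compare against.

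Your proposal supplies the omitted details along exactly the natural lines and is correct. The two places you single out as obstacles are precisely where the defining hypotheses of the class $\mathcal P_1$ do real work: condition~(i) (that $\fr U$ is a $C^0$ $(n-1)$-manifold without boundary) rules out three or more pages meeting along $\Omega(v)$, and condition~(ii) (the flat cell structure) is what makes the local picture near $x$ an exact wedge on a full ball $B(x,r_x)$ rather than only a tangent-cone statement. Your use of Lemma~\ref{l2.1} to pin $U$ to a single side of each page, hence to exactly one of the two complementary wedges, is the right mechanism.

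One minor caveat: your nonemptiness argument for $\varkappa$ invokes connectedness of $\fr U$, which is not an explicit hypothesis of the lemma as stated. It is, however, the standing assumption in Theorem~\ref{t2.1}, the only place the lemma is applied, so this is harmless in context.
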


\textbf{Remark~2.3.}
The proofs of Lemmas~\ref{l2.1} and~\ref{l2.2} are rather simple. For this reason,
we omit them.

\begin{lemma}\label{l2.3}
Let
$p_1,p_2$
be points of the hyperplane
$\tau_{n - 1} = \{x \in \mathbb R^n: x_n = 0\}$
with
$|p_j| = 1$
$(j = 1,2)$
and let
$F_1,F_2$
be disjoint continua on
$\tau_{n - 1}$
such that
$F_1$
is bounded and contains the points
$0$
and
$p_1,$
whereas
$F_2$
is unbounded and contains
$p_2$.
Then
\begin{equation}\label{eq3.3}
M^{\mathbb R^n_+}(\{F_1,F_2\}) \ge \lambda_n =
\frac{(n - 1)v_{n - 1}\log 3}{16} \Biggl(\frac{[\Gamma(\frac{1}{2(n - 1)})]^2}
{\Gamma(\frac{1}{n - 1})}\Biggr)^{1 - n},
\end{equation}
where
$\mathbb R^n_+ = \{x \in \mathbb R^n: x_n > 0\},$
$v_n$
is the volume of the
$n$-dimensional
unit ball, and
$\Gamma$
is the Euler gamma-function.
\end{lemma}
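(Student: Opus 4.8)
The plan is to read $M^{\mathbb R^n_+}(\{F_1,F_2\})$ as the conformal ($n$-)capacity of the condenser with plates $F_1,F_2$ and to bound it from below by reducing to a rotationally symmetric model whose capacity can be written out explicitly. First I would remove the half-space: reflecting in $\tau_{n-1}$ and replacing any $\rho$ admissible for $\Gamma_{F_1,F_2,\mathbb R^n_+}$ by its even extension produces an admissible metric for the family $\Gamma=\Gamma_{F_1,F_2,\mathbb R^n}$ of all paths joining $F_1$ and $F_2$ in the whole space, with $\int_{\mathbb R^n}\rho^n=2\int_{\mathbb R^n_+}\rho^n$; hence $M^{\mathbb R^n_+}(\{F_1,F_2\})\ge\tfrac12 M_n(\Gamma)$. (Folding may push some interior points of a path onto $\tau_{n-1}$, but this exceptional subfamily carries no modulus and is harmless.) Since $M_n$ is a conformal invariant, I am free to normalize by similarities and inversions; this is where the hypotheses $|p_1|=|p_2|=1$, $0,p_1\in F_1$, $p_2\in F_2$ enter, as they guarantee $\diam F_1\ge1$ and $\dist(F_1,F_2)\le1$.

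Next I would strip the plates down to a computable shape. Because enlarging a plate can only enlarge the joining family, I may replace $F_1$ by an irreducible subcontinuum joining $0$ to $p_1$ and $F_2$ by a subcontinuum joining $p_2$ to $\infty$, losing nothing in the lower bound. Applying spherical symmetrization about a suitable axis to these two continua turns them into radial continua that cross a common spherical shell $\{a<|x|<b\}$ and, by the symmetrization theory for conformal capacity, does not increase the modulus of the joining family; the normalizations $\diam F_1\ge1$, $|p_2|=1$ and $F_1\cap F_2=\varnothing$ force the radius ratio $b/a$ to be at least $3$. After this reduction the task becomes the computation of the conformal capacity of a standard rotationally symmetric ring whose plates run from the inner to the outer sphere.

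To evaluate that model I would uncoil the shell conformally: in logarithmic--spherical coordinates $(t,\omega)$, $t=\log(|x|/a)\in(0,T)$ with $T=\log(b/a)\ge\log3$ and $\omega\in S^{n-1}$, the ring becomes the cylinder $(0,T)\times S^{n-1}$, and by rotational symmetry the extremal $n$-potential $u$ depends only on the polar angle $\theta$ measured from the axis. Its Euler--Lagrange equation is
\begin{equation}\label{eqpropEL}
(u'(\theta))^{n-1}(\sin\theta)^{n-2}=\mathrm{const},
\end{equation}
so that $u'(\theta)=\lambda(\sin\theta)^{-(n-2)/(n-1)}$, and the normalization $\int u'\,d\theta=1$ brings in exactly the Euler integral $\int(\sin\theta)^{-(n-2)/(n-1)}\,d\theta$, which is a beta value and produces the factor $\bigl(\Gamma(\tfrac1{2(n-1)})^2/\Gamma(\tfrac1{n-1})\bigr)^{1-n}$. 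Integrating $(u')^n(\sin\theta)^{n-2}$ against the spherical volume element contributes the surface constant $\omega_{n-2}=(n-1)v_{n-1}$, while the $t$-integration contributes the length $T\ge\log3$. Feeding the result through the factor $\tfrac12$ from the reflection, together with the further numerical losses incurred in the symmetrization step, then yields the stated bound $M^{\mathbb R^n_+}(\{F_1,F_2\})\ge\lambda_n$.

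The routine parts are the reflection identity, the conformal change of variables, and solving~\eqref{eqpropEL}. I expect the main obstacle to be the middle paragraph: I must show that spherical symmetrization genuinely does not increase the modulus of the joining family when the plates lie in a hyperplane and, more delicately, that for the only loosely constrained positions of $F_1$ and $F_2$ there always survives a concentric shell of ratio at least $3$ crossed by both symmetrized continua, since a naive shell centered at $0$ degenerates when $\dist(F_1,F_2)$ is close to $1$. Securing an honest universal ratio here, and then bookkeeping the constants so that the accumulated losses are absorbed into the factor $\tfrac1{16}$, is the crux; the beta-function evaluation itself is then a direct computation.
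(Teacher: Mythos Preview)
The paper does not actually prove this lemma here; Remark~2.4 records that it is Lemma~8.1 of~\cite{Ko2}, obtained by transporting to the half-space setting the ``rough but direct calculations'' behind Theorem~3.10 of V\"ais\"al\"a~\cite{V1}, with symmetrization input from Gehring~\cite{Ge2},~\cite{Ge3}. Your outline --- reflect across $\tau_{n-1}$ to pass to the full-space condenser, reduce by spherical symmetrization to a rotationally symmetric model, and then compute that model by solving the one-variable Euler--Lagrange equation in spherical coordinates --- is precisely the V\"ais\"al\"a--Gehring route, so you are on the same path the paper points to.

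Your stated worry, however, is aimed at the wrong target, and if you pursue it literally you will get stuck exactly where you fear: there is no way to locate a concentric shell of ratio $\ge 3$ in the raw data when $\dist(F_1,F_2)$ is close to $1$. The classical argument does not look for such a shell. Spherical symmetrization (using $0,p_1\in F_1$, $p_2,\infty\in F_2$, $|p_1|=|p_2|=1$) pushes the configuration down to the Teichm\"uller ring with plates $[-1,0]$ and $[1,\infty)$ on a line, and it is only in the subsequent explicit lower estimate for the capacity of \emph{that} fixed ring that the number $\log 3$ appears --- through V\"ais\"al\"a's comparison of $R_T(1)$ with a Gr\"otzsch-type configuration, not through any shell in the original $F_1,F_2$. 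The gamma-function factor then falls out of the spherical integral $\int(\sin\theta)^{-(n-2)/(n-1)}d\theta$ exactly as you describe, and the halving from reflection together with the slack in the Teichm\"uller/Gr\"otzsch estimate is what produces the $1/16$. So replace ``find a shell of ratio $3$ crossed by both symmetrized continua'' by ``symmetrize to the Teichm\"uller ring $R_T(1)$ and bound that''; the rest of your sketch then goes through without the obstacle you anticipated.
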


\textbf{Remark~2.4.}
Lemma~\ref{l2.3} goes back to Theorem~3.10 in~\cite{V1} and the
results of~\cite{Ge2} and~\cite{Ge3}. By Theorem~3.10
in~\cite{V1}, for example,
\begin{equation}\label{eq3.4}
M^{\mathbb R^3 \setminus \{F_1 \cup F_2\}} (\{F_1,F_2\}) \ge \lambda_3,
\end{equation}
where
$F_1$
and
$F_2$
are disjoint continua in
$\mathbb R^3$
such that
$F_1$
is bounded and contains the points
$0$
and
$p_1$,
and
$F_2$
is unbounded and contains
$p_2$,
moreover,
$|p_j| = 1$,
$j = 1,2$.
In~\cite{V1}, there was choisen a way, which made it possible to
obtain estimate~(\ref{eq3.4}) by rather rough but direct
calculations. Using the same calculations, the author proved
Lemma~\ref{l2.3} in~\cite{Ko2} (see Lemma~8.1 in~\cite{Ko2}).

\begin{lemma}\label{l2.4}
The relative conformal modulus
$M(A_t) = M^{\mathbb R^n_+} (A_t)$
of the boundary condenser
$A_t$
$(0 < t < \infty)$
of the half-space
$\mathbb R^n_+$
whose components are segments
\begin{equation}\label{eq3.5}
F_1 = F_1(t) = \{x \in \mathbb R^n: |x_1| \le \frac{t}{2},
x_2 = -\frac{1}{2}, x_j = 0, j = 3,4,\dots,n\}
\end{equation}
and
\begin{equation}\label{eq3.6}
F_2 = F_2(t) = \{x \in \mathbb R^n: |x_1| \le \frac{t}{2},
x_2 = \frac{1}{2}, x_j = 0, j = 3,4,\dots,n\},
\end{equation}
has the following properties{\rm:}
{\rm(i)}
$0 < M(A_t) < \infty;$
{\rm(ii)}
$M(A_t) \to 0$
as
$t \to 0$
and
$M(A_t) \to \infty$
as
$t \to \infty;$
{\rm(iii)}
$M(A_t)$
is an increasing function {\rm(}in the wide sense{\rm)} of the parameter~$t:$
$M(A_{t_1}) \le M(A_{t_2})$
if
$t_1 < t_2;$
{\rm(iv)}
if
$0 < t_1 < t_2 < \infty$
then
$t^{-1}_2 M(A_{t_2}) \le t^{-1}_1 M(A_{t_1})$.
\end{lemma}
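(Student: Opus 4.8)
The plan is to derive all four properties from three ingredients: monotonicity of the $n$-modulus under inclusion of path families, a $1$-Lipschitz compression of $\mathbb R^n_+$ along the $x_1$-axis, and two comparison families. Property (iii) is the easiest: if $t_1<t_2$ then $F_1(t_1)\subset F_1(t_2)$ and $F_2(t_1)\subset F_2(t_2)$, so every path joining the components of $A_{t_1}$ in $\cl \mathbb R^n_+$ also joins the components of $A_{t_2}$; hence $\Gamma_{F_1(t_1),F_2(t_1),\mathbb R^n_+}\subset\Gamma_{F_1(t_2),F_2(t_2),\mathbb R^n_+}$, and monotonicity of the modulus gives $M(A_{t_1})\le M(A_{t_2})$.

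The decisive step is (iv). I will use the affine map $\Psi(y)=\bigl(\tfrac{t_1}{t_2}\,y_1,y_2,\dots,y_n\bigr)$. Since $t_1/t_2<1$, $\Psi$ is $1$-Lipschitz; it fixes $y_n$, hence maps $\mathbb R^n_+$ and $\cl \mathbb R^n_+$ into themselves, and it carries $F_j(t_2)$ onto $F_j(t_1)$ for $j=1,2$. Consequently $\Psi$ sends every $\gamma\in\Gamma_{F_1(t_2),F_2(t_2),\mathbb R^n_+}$ to a path $\Psi\circ\gamma\in\Gamma_{F_1(t_1),F_2(t_1),\mathbb R^n_+}$. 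Let $\rho_1$ be any function admissible for $A_{t_1}$. The inequality $|(\Psi\circ\gamma)'|\le|\gamma'|$ (again because $\Psi$ is $1$-Lipschitz) yields $\int_\gamma(\rho_1\circ\Psi)\,ds\ge\int_{\Psi\circ\gamma}\rho_1\,ds\ge1$, so $\rho_1\circ\Psi$ is admissible for $A_{t_2}$. The change of variables $x=\Psi(y)$, whose Jacobian is $t_2/t_1$, gives $\int_{\mathbb R^n}(\rho_1\circ\Psi)^n\,dy=\tfrac{t_2}{t_1}\int_{\mathbb R^n}\rho_1^n\,dx$; taking the infimum over admissible $\rho_1$ we obtain $M(A_{t_2})\le\tfrac{t_2}{t_1}M(A_{t_1})$, which is exactly (iv).

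It remains to establish (i) and (ii) by comparison. For a lower bound I will place along the plates roughly $t/3$ mutually disjoint ``unit'' subcondensers: choosing centers $c_k=(a_k,0,\dots,0)$ with $a_k\in[-t/2,t/2]$ spaced $3$ apart, the subfamilies of $\Gamma_{F_1(t),F_2(t),\mathbb R^n_+}$ consisting of crossings confined to the disjoint balls $B(c_k,1)$ are superadditive, so $M(A_t)\ge(\tfrac t3-1)\,m_0$, where $m_0>0$ is the modulus of a single unit crossing family. The positivity of $m_0$ — and hence the left inequality in (i) for every fixed $t>0$ — follows from the standard fact that two disjoint nondegenerate continua at positive distance have positive connecting modulus, transferred from $\mathbb R^n$ to $\mathbb R^n_+$ by even reflection in $\{x_n=0\}$ (which gives $M^{\mathbb R^n_+}\ge\tfrac12 M^{\mathbb R^n}$). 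The same bound $M(A_t)\ge(\tfrac t3-1)m_0\to\infty$ establishes the second limit in (ii). For an upper bound, note that $F_j(t)\subset\bar B(P_j,t/2)$ with $P_1=(0,-\tfrac12,0,\dots,0)$ and $P_2=(0,\tfrac12,0,\dots,0)$; for $t<1$ every connecting path must cross the spherical ring $\{t/2<|x-P_1|<1/2\}$, so $M(A_t)\le c_n(\log\tfrac1t)^{1-n}$. This is finite and tends to $0$ as $t\to0$ (the vanishing of the conformal capacity of the point $P_1$), giving the first limit in (ii); finiteness of $M(A_t)$ for all $t$ (the right inequality in (i)) then follows from this together with (iv).

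I expect the technical crux to lie not in (iii) or (iv) — which become elementary once the compression $\Psi$ is in hand — but in the two soft facts supporting the comparison bounds: the positivity of the base modulus $m_0$ inside the half-space and the superadditivity of the disjoint-ball subfamilies. Both are classical in modulus theory, yet they require care here, since the two plates are connected: one must genuinely extract pairwise disjoint-support subfamilies of $\Gamma_{F_1(t),F_2(t),\mathbb R^n_+}$ and justify the reflection estimate $M^{\mathbb R^n_+}\ge\tfrac12 M^{\mathbb R^n}$ for curves meeting the boundary hyperplane.
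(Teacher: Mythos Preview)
The paper does not actually prove this lemma: immediately after the statement it inserts Remark~2.5, which refers the reader to Lemma~8.2 of~\cite{Ko2} for the proof and points out that the construction originates with Lemma~3.6 of~\cite{GV} (there for $n=3$ and for the full-space condenser). So there is no in-paper argument to compare against, and your task is really to reproduce the \cite{Ko2} proof from scratch.

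Your argument is correct and is almost certainly the intended one. Property~(iii) is indeed pure monotonicity of path families. For~(iv), your one-coordinate compression $\Psi(y)=\bigl(\tfrac{t_1}{t_2}y_1,y_2,\dots,y_n\bigr)$ is exactly the device used in~\cite{GV}, transplanted to the half-space; the $1$-Lipschitz bound gives admissibility of $\rho_1\circ\Psi$, and the Jacobian $t_1/t_2$ produces the factor $t_2/t_1$ in the energy, which is precisely the statement. For~(i)--(ii), your spherical-ring upper bound $M(A_t)\le c_n(\log\tfrac1t)^{1-n}$ and the disjoint-ball superadditivity lower bound are the standard comparisons; combined with~(iv) they give finiteness for all $t$ and linear growth at infinity.

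The only point that deserves a word of caution is the positivity of your base quantity $m_0$. The reflection inequality $M^{\mathbb R^n_+}\ge\tfrac12 M^{\mathbb R^n}$ is true for plates lying on $\{x_n=0\}$, but the folding map $x\mapsto(x_1,\dots,x_{n-1},|x_n|)$ does not literally send every full-space connecting path into your family $\Gamma_k$, since the image may touch $\{x_n=0\}$ at interior parameters; one has to throw away the exceptional (zero-modulus) subfamily of paths that spend positive length on the hyperplane before folding. If you prefer to avoid this, you can instead get $m_0>0$ directly from a Loewner-type estimate in $\mathbb R^n_+$ of the same flavour as the paper's Lemma~\ref{l2.3} (applied after a M\"obius normalization sending one plate to an unbounded continuum), which sidesteps the reflection entirely. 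Either route closes the argument.
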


\textbf{Remark~2.5.}
The condenser
$A_t$
was first considered (in the case
$n = 3$)
in~\cite{GV} in connection with study of the boundary values of
quasiconformal mappings of domains in~$\mathbb R^3$:
by Lemma~3.6 in~\cite{GV}, the function
$$
\zeta_3(t) = t^{-1} M^{\mathbb R^3 \setminus \{F_1(t) \cup F_2(t)\}}
(\{F_1(t),F_2(t)\}), \quad 0 < t < \infty,
$$
decreases (in the wide sense:
$\zeta_3(t_2) \le \zeta_3(t_1)$
if
$0 < t_1 < t_2 < \infty$),
moreover,
$\zeta_3(t) \to \alpha \in ]0,1[$
as
$t \to \infty$.
In contrast to~\cite{GV}, here (as well as
in~\cite{Ko2}), we consider
$A_t$
as a boundary condenser of the half-space
$\mathbb R^n_+$
and use its properties from Lemma~\ref{l2.4}, whose proof can be
found in~\cite{Ko2} (see Lemma~8.2 in~\cite{Ko2}). Below, we use
the notion of a growth point
$t_n$
of the function
$M^{\mathbb R^n_+}(A_t)$
which is introduced in~\cite{Ko2} as follows:
$t_n$
is a point in $]0,\infty[$
such that
\begin{equation}\label{eq3.7}
M^{\mathbb R^n_+}(A_t) > M^{\mathbb R^n_+}(A_{t_n})
\end{equation}
for every
$t > t_n$
(the existence of growth points for
$M^{\mathbb R^n_+}(A_t)$
ensues directly from Lemma~\ref{l2.4}. For definiteness, we will further assume that
$t_n$
is the least number
$t^*_n \ge 1$
satisfying~(\ref{eq3.7}) if inserted instead of
$t_n$.
Clearly, this number is a growth point of
$M^{\mathbb R^n_+}(A_t)$
too.)

\begin{lemma}\label{l2.5}
Assume that
$0 < t < \infty,$
$A_t = \{F_1,F_2\} = \{F_1(t),F_2(t)\}$
is the boundary condenser of the half-space
$\mathbb R^n_+$
whose components are the segments
$F_1 = F_1(t)$
and
$F_2 = F_2(t)$
defined by~{\rm(\ref{eq3.5})} and~{\rm(\ref{eq3.6})} and
$$
F^{\tau}_j = \{x \in \mathbb R^n: \dist(x,F_j) \le \tau,\,\, x_n \ge 0\},
\quad 0 < \tau < 1/2, \quad j = 1,2.
$$
Then
$$
M(A_t) = \lim_{\tau \to 0} M_n(\Gamma(\tau)),
$$
where
$\Gamma(\tau) = \Gamma_{F^{\tau}_1,F^{\tau}_2,\{F^{\tau}_1
\cup F^{\tau}_2\}}$
is the family of paths joining
$F^{\tau}_1$
and
$F^{\tau}_2$
in
$\mathbb R^n_+ \setminus \{F^{\tau}_1 \cup F^{\tau}_2\}$.
\end{lemma}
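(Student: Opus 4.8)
The plan is to prove two one-sided estimates, namely $M(A_t)\le M_n(\Gamma(\tau))$ for every $\tau\in{]0,1/2[}$ and $\limsup_{\tau\to0}M_n(\Gamma(\tau))\le M(A_t)$; together these force the limit to exist and to equal $M(A_t)$. Throughout I use that, by definition~(\ref{eq1.2}), $M(A_t)=M_n(\Gamma_{F_1,F_2,\mathbb R^n_+})$ is the $n$-modulus of the family of paths joining $F_1$ to $F_2$ in $\cl\mathbb R^n_+$ with interior in $\mathbb R^n_+$.

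First, the lower estimate. I would show that every path $\gamma\in\Gamma_{F_1,F_2,\mathbb R^n_+}$ contains a subpath belonging to $\Gamma(\tau)$. Since $\gamma(0)\in F_1\subset F_1^\tau$ and $\gamma(1)\in F_2\subset F_2^\tau$, let $s_2$ be the first parameter with $\gamma(s_2)\in F_2^\tau$ and let $s_1<s_2$ be the last parameter before $s_2$ with $\gamma(s_1)\in F_1^\tau$. As $\dist(F_1,F_2)=1$, the tubes $F_1^\tau,F_2^\tau$ are disjoint for $\tau<1/2$, so $0<s_1<s_2<1$, and $\gamma|_{[s_1,s_2]}$ runs from $F_1^\tau$ to $F_2^\tau$ with interior in $\mathbb R^n_+\setminus(F_1^\tau\cup F_2^\tau)$; that is, $\gamma|_{[s_1,s_2]}\in\Gamma(\tau)$. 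By the overflowing (minorization) principle for moduli this gives $M(A_t)=M_n(\Gamma_{F_1,F_2,\mathbb R^n_+})\le M_n(\Gamma(\tau))$, hence $M(A_t)\le\liminf_{\tau\to0}M_n(\Gamma(\tau))$.

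Now the upper estimate, which is the substantial part. I take $\rho=|\nabla u|$, where $u$ is the extremal $n$-capacitary potential of the condenser $(F_1,F_2)$ in $\mathbb R^n_+$; then $\rho$ is admissible for $\Gamma_{F_1,F_2,\mathbb R^n_+}$ and $\int_{\mathbb R^n}\rho^n=M(A_t)$. Given $\gamma\in\Gamma(\tau)$ from $a=\gamma(0)\in\partial F_1^\tau$ to $b=\gamma(1)\in\partial F_2^\tau$, I extend it to a path $\tilde\gamma\in\Gamma_{F_1,F_2,\mathbb R^n_+}$ by prepending a connector $c_1\subset F_1^\tau\cap\cl\mathbb R^n_+$ from the nearest point of $F_1$ to $a$ and appending an analogous $c_2\subset F_2^\tau$; each connector has length $\le\tau$ and stays in $\cl\mathbb R^n_+$. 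Admissibility of $\rho$ yields $1\le\int_{\tilde\gamma}\rho=\int_{c_1}\rho+\int_\gamma\rho+\int_{c_2}\rho$, so $\int_\gamma\rho\ge1-d(\tau)$, where $d(\tau)$ is the supremum of $\int_{c_1}\rho+\int_{c_2}\rho$ over all such connectors. Once $d(\tau)\to0$ is known, $\rho/(1-d(\tau))$ is admissible for $\Gamma(\tau)$ for small $\tau$, whence $M_n(\Gamma(\tau))\le(1-d(\tau))^{-n}M(A_t)\to M(A_t)$, giving the desired $\limsup$ bound.

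The main obstacle is exactly the uniform bound $d(\tau)\to0$. The soft fact $\int_{F_j^\tau}\rho^n\to0$ (dominated convergence, since $|F_j^\tau|\to0$) does not suffice, because $\rho$ is genuinely unbounded near the one-dimensional sets $F_j$; what is needed is the precise rate of this blow-up, and this is where the specific capacitary choice of $\rho$ is essential. After an even reflection across $\{x_n=0\}$ (legitimate, since $u$ satisfies the natural/Neumann condition on $\{x_n=0\}\setminus(F_1\cup F_2)$ and each $F_j$ lies in this plane), near an interior point of $F_j$ the potential $u$ is, to leading order, the radial $n$-capacitary potential of a segment in $\mathbb R^n$, so that $u$ behaves like $\dist(\cdot,F_j)^{1/(n-1)}$ and $\rho\lesssim\dist(\cdot,F_j)^{-(n-2)/(n-1)}$ uniformly in the transverse directions. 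Since $(n-2)/(n-1)<1$, integration along a radial connector gives $\int_{c_j}\rho\lesssim\int_0^\tau r^{-(n-2)/(n-1)}\,dr\asymp\tau^{1/(n-1)}\to0$, uniformly over landing points on the interiors of the faces; the lower-dimensional neighborhoods of the segment endpoints are controlled by the same estimate (with a worse but still integrable exponent) or discarded as a subfamily of vanishing modulus. This establishes $d(\tau)\to0$ and closes the proof. Thus the technical heart is the interior/boundary regularity of the $n$-harmonic capacitary potential near the segments $F_j$; all the rest is the standard modulus bookkeeping above.
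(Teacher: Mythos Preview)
The paper does not actually prove Lemma~\ref{l2.5}; Remark~2.6 refers the reader to Lemma~3.4 of Gehring--V\"ais\"al\"a~\cite{GV} (a continuity-of-modulus statement valid for \emph{arbitrary} disjoint bounded continua in an open set) and to Lemma~8.4 of~\cite{Ko2}. Your minorization argument for the inequality $M(A_t)\le M_n(\Gamma(\tau))$ is correct and is exactly the standard one.

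For the reverse inequality you take a route that is genuinely different from, and much heavier than, the Gehring--V\"ais\"al\"a argument. Since their lemma applies to arbitrary continua $E_1,E_2$, its proof cannot depend on the special cylindrical geometry of segments; it proceeds by choosing a near-extremal admissible $\rho$ that is \emph{bounded} (equivalently, by passing to capacity and using Lipschitz competitors), after which every connector of length $\le\tau$ automatically has $\rho$-length $O(\tau)$ and your extension scheme goes through with no further analysis. Your version instead stakes everything on the pointwise blow-up rate of $|\nabla u|$ for the extremal $n$-harmonic potential $u$ near a one-dimensional obstacle.

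That is where a real gap sits. The cylindrical heuristic $|\nabla u|\lesssim \dist(\cdot,F_j)^{-(n-2)/(n-1)}$ is correct as a model computation, but upgrading it to a rigorous \emph{uniform} bound needs (a) a barrier/comparison argument to get $u\lesssim r^{1/(n-1)}$ together with interior $C^{1,\alpha}$ gradient estimates for $p$-harmonic functions to convert this into a gradient bound, and (b) a separate treatment at the four segment endpoints, where the cylindrical model is invalid. Your proposed fix ``discarded as a subfamily of vanishing modulus'' does not work as stated: to conclude $M_n(\Gamma(\tau))\le (1-d(\tau))^{-n}M(A_t)$ you need $\rho/(1-d(\tau))$ admissible for \emph{all} of $\Gamma(\tau)$, and the paths landing in the endpoint caps of $F_j^\tau$ form a subfamily of \emph{positive} modulus for each fixed $\tau$. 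One can rescue this by splitting off the paths that enter a small ball around each endpoint (that subfamily is minorized by an annulus family and has modulus $O((\log R/\epsilon)^{1-n})$), but this and the regularity in (a) are several pages of genuine $p$-Laplacian theory that you have asserted rather than supplied. The cleaner repair is simply to drop the extremal $\rho$ and run your connector argument with a bounded near-extremal $\rho$, which is the content of the Gehring--V\"ais\"al\"a proof the paper cites.
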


\textbf{Remark~2.6.}
Lemma~\ref{l2.5} goes back to Lemma~3.4 in~\cite{GV} about the
continuity of moduli, by which
$$
M(\Gamma) = \lim_{\tau \to 0} M_n(\Gamma(\tau)),
$$
where
$\Gamma$
and
$\Gamma(\tau)$
are the families of curves joining disjoint bounded continua
$E_1$
and
$E_2$
and
$E_1(\tau)$
and
$E_2(\tau)$,
respectively in an open set $U \subset \mathbb R^3$
(here
$E_j(\tau) = \{x \in U: \dist(x,E_j) \le \tau\},\,\, j = 1,2$,
moreover,
$\tau$
is sufficiently small). The proof of Lemma~\ref{l2.5} can be
found in~\cite{Ko2} (see Lemma~8.4 in~\cite{Ko2}).

\begin{lemma}\label{l2.6}
Assume that
$n \ge 4$
and
$0 < \alpha \le 2\pi$.
Put
$F_1 = \{x \in \mathbb R^n : -1 \le x_1 \le 0,
x_j = 0, j = 2,3,\dots,n\}$
and
$F_2 = \{x \in \mathbb R^n : 1 \le x_1 \le \infty,
x_j = 0, j = 2,3,\dots,n\}$.
If
$\Gamma_{\alpha}$
is the family of all paths connecting
$F_1$
and
$F_2$
in
$V_{\alpha}$
then
$M^{V_{\alpha}}(A) = M_n(\Gamma_{\alpha}) =
\frac{\alpha}{\pi}M_n(\Gamma_{\pi}) =
\frac{\alpha}{\pi}M^{V_{\pi}}(A),$
where
$A = \{F_1,F_2\}$
is the boundary condenser of~$V_{\alpha}$
with components
$F_1$
and
$F_2;$
moreover{\rm,}
$V_{\alpha} = \{(x_1,x_2,\dots,x_{n - 2},x_{n - 1},x_n) \in
\mathbb R^n : x_j \in \mathbb R, j = 1,2,\dots,n - 2,
x_{n - 1} = r\cos\theta, x_n = r\sin\theta, 0 < r < \infty,
0 < \theta < \alpha\},$
$0 < \alpha \le 2\pi$.
\end{lemma}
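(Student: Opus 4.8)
The plan is to exploit the rotational and reflectional symmetry of the wedge $V_\alpha$ about its edge $E = \{x : x_{n-1} = x_n = 0\}$, on which both $F_1$ and $F_2$ lie. The key structural fact is that every rotation $\theta \mapsto \theta + \phi$ about $E$ and every reflection $\theta \mapsto c - \theta$ in the $(x_{n-1},x_n)$-plane is a Euclidean isometry of $\mathbb R^n$ fixing $E$ pointwise, hence fixing $F_1$ and $F_2$; since the $n$-modulus is invariant under isometries, such maps carry families of $\Gamma_\alpha$-type to families of the same type without changing their moduli. The whole proof reduces to establishing the single functional equation $M_n(\Gamma_\alpha) = m\,M_n(\Gamma_{\alpha/m})$ for every positive integer $m$ (with $\alpha \le 2\pi$), from which proportionality follows.

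For the lower bound $M_n(\Gamma_\alpha) \ge m\,M_n(\Gamma_{\alpha/m})$, I would slice $V_\alpha$ by the half-hyperplanes $\{\theta = k\alpha/m\}$ into $m$ congruent open subwedges $W_1,\dots,W_m$, each a rotated copy of $V_{\alpha/m}$. Let $\Gamma^{(k)}$ be the subfamily of those paths of $\Gamma_\alpha$ whose locus lies in $\cl W_k$; then $\Gamma^{(k)}$ is isometric, by a rotation fixing $F_1,F_2$, to the family defining $M_n(\Gamma_{\alpha/m})$, so $M_n(\Gamma^{(k)}) = M_n(\Gamma_{\alpha/m})$. The open subwedges are pairwise disjoint and their common walls carry no $n$-dimensional measure, so the families $\Gamma^{(1)},\dots,\Gamma^{(m)}$ are separate; by the additivity (parallel) law for the $n$-modulus of separate families, $M_n(\bigcup_k \Gamma^{(k)}) = \sum_k M_n(\Gamma^{(k)}) = m\,M_n(\Gamma_{\alpha/m})$. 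Since $\bigcup_k \Gamma^{(k)} \subset \Gamma_\alpha$, monotonicity of the modulus yields the claimed lower bound.

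For the upper bound $M_n(\Gamma_\alpha) \le m\,M_n(\Gamma_{\alpha/m})$, I would take a metric $\rho_0$ admissible for $\Gamma_{\alpha/m}$ on $V_{\alpha/m}$ with $\int_{V_{\alpha/m}} \rho_0^n \le M_n(\Gamma_{\alpha/m}) + \varepsilon$ and spread it over $V_\alpha$ by accordion-folding: let $\pi : V_\alpha \to V_{\alpha/m}$ be the piecewise-isometric map obtained by successive reflections across the walls $\{\theta = k\alpha/m\}$, and set $\rho = \rho_0 \circ \pi$. Each reflection is an isometry fixing $E$, so $\pi$ is a local isometry fixing $F_1,F_2$ and preserving arc length. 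Given a rectifiable $\gamma \in \Gamma_\alpha$, its image $\pi\circ\gamma$ is a rectifiable path of the same length joining $F_1$ to $F_2$ in $\cl V_{\alpha/m}$, whence $\int_\gamma \rho\,ds = \int_{\pi\circ\gamma}\rho_0\,ds \ge 1$; thus $\rho$ is admissible for $\Gamma_\alpha$. Since the $m$ folded copies tile $V_\alpha$ up to the measure-zero walls, $\int_{V_\alpha}\rho^n = m\int_{V_{\alpha/m}}\rho_0^n \le m(M_n(\Gamma_{\alpha/m}) + \varepsilon)$, and letting $\varepsilon \to 0$ gives the upper bound.

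Combining the two bounds gives $M_n(\Gamma_\alpha) = m\,M_n(\Gamma_{\alpha/m})$; specializing $\alpha = \pi$ shows $M_n(\Gamma_{k\pi/m}) = (k/m)M_n(\Gamma_\pi)$ for all admissible rationals $k/m$, i.e.\ $M_n(\Gamma_\alpha) = (\alpha/\pi)M_n(\Gamma_\pi)$ on a dense set. Finally, $\alpha \mapsto M_n(\Gamma_\alpha)$ is non-decreasing, since $V_\alpha \subset V_{\alpha'}$ forces $\Gamma_\alpha \subset \Gamma_{\alpha'}$ for $\alpha \le \alpha'$; a monotone function coinciding with a linear one on a dense set equals it everywhere, so $M_n(\Gamma_\alpha) = (\alpha/\pi)M_n(\Gamma_\pi)$ for all $0 < \alpha \le 2\pi$, which is the assertion (with $V_\pi = \mathbb R^n_+$). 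The main obstacle is the careful treatment of paths that run along the dividing walls: both the additivity law in the lower bound and the admissibility of the folded metric in the upper bound require such boundary-touching paths to be controlled. This I would handle by passing to the thickened condensers $F_j^\tau$ and invoking the continuity of moduli under thickening (Lemma~\ref{l2.5}), together with the fact that the walls carry no $n$-dimensional measure.
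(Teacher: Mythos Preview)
The paper itself does not prove this lemma: Remark~2.7 only records that it generalizes Lemma~7.1 of~\cite{GV} (the case $n=3$) and refers to Lemma~8.7 of~\cite{Ko2} for the argument. Your reflection-and-folding approach is the standard route to such wedge identities and is, in outline, correct; there is nothing in the present paper to compare it against.

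One point does need repair. Your appeal to Lemma~\ref{l2.5} for the wall-touching paths is off target: that lemma concerns only the specific condenser $A_t$ in the half-space $\mathbb R^n_+$ and says nothing about the wedge families $\Gamma_\beta$. The clean fix uses the very symmetry you already exploit. For the lower bound, take $\Gamma^{(k)}$ to consist of those $\gamma\in\Gamma_\alpha$ with interior in the \emph{open} subwedge $W_k$ (not merely locus in $\cl W_k$); their loci then lie, up to the two endpoints on $E$, in the pairwise disjoint Borel sets $W_k$, so the separateness criterion for moduli in~\cite{V} applies directly and each $\Gamma^{(k)}$ is a rotated copy of $\Gamma_{\alpha/m}$. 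For the upper bound, note that the folded path $\pi\circ\gamma$ has interior in $\cl V_{\alpha/m}\setminus E$, and that the rotation $R_\varepsilon$ about $E$ by angle $\varepsilon>0$ fixes $F_1,F_2$ pointwise and carries $\cl V_{\alpha/m}\setminus E$ into the open wedge $V_{\alpha/m+2\varepsilon}$; hence $R_\varepsilon\circ\pi\circ\gamma\in\Gamma_{\alpha/m+2\varepsilon}$. Running your argument with $\rho_0$ admissible for $\Gamma_{\alpha/m+2\varepsilon}$ and $\rho=\rho_0\circ R_\varepsilon\circ\pi$ yields
\[
M_n(\Gamma_\alpha)\le m\,M_n(\Gamma_{\alpha/m+2\varepsilon})\qquad\text{for every }\varepsilon>0.
\]
Combined with your lower bound $M_n(\Gamma_\alpha)\ge m\,M_n(\Gamma_{\alpha/m})$ and the monotonicity of $\alpha\mapsto M_n(\Gamma_\alpha)$, a routine squeeze gives the exact identity $M_n(\Gamma_\alpha)=m\,M_n(\Gamma_{\alpha/m})$ at every continuity point of this monotone function; your density-plus-monotonicity step then extends it to all $\alpha\in(0,2\pi]$.
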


\textbf{Remark~2.7.}
Lemma~\ref{l2.6} is a generalization of Lemma~7.1 in~\cite{GV}
concerning the case
$n = 3$
to~$n \ge 4$
(see Lemma~8.7 in~\cite{Ko2}).

\begin{proof}[Proof of Theorem~{\rm\ref{t2.1}}]
Suppose that
$U_1$
is as in the hypothesis of the theorem,
$U_2$
is a domain of class
$\mathcal P_1$,
and
$f : \fr U_1 \to \fr U_2$
is a homeomorphism of the boundary
$\fr U_1$
of~$U_1$
onto the boundary
$\fr U_2$
of~$U_2$
preserving the relative conformal moduli of boundary condensers.
It is sufficient to show that there exists a similarity
transformation
$F : \mathbb R^n \to \mathbb R^n$
satisfying the condition
$U_2 = F(U_1)$
(and
$f = F|_{\fr U_1}$).

To this end, note first that, by the connectedness of the boundary
$\fr U_1$
of~$U_1$,
and the fact that $f$ is a homeomorphism, the boundary
$\fr U_2$
of~$U_2$
is also connected, and then consider an
($n - 1$)-dimensional
face
$s$
of the boundary
$\fr U_1$
of the domain
$U_1$.
Clearly, there exists an
($n - 1$)-dimensional
face
$\widetilde{s}$
of the boundary
$\fr U_2$
of~$U_2$
such that
$$
B(x,r) \cap f(s) = B(x,r) \cap \inter \widetilde{s} \ne\varnothing
$$
for some
$x \in \inter \widetilde{s}$
and
$r > 0$.

Let
$\widetilde{\sigma}$
be a connected component of the set
$\inter \widetilde{s} \cap \inter_{\fr U_2} f(s)$.
Assume that
$\sigma = f^{-1}(\widetilde{\sigma})$.
We assert that the restriction
$f|_{\sigma}$
of
$f$
to
$\sigma$
is an
($n - 1$)-dimensional
conformal mapping.

Indeed, taking into account that the relative conformal modulus
$M^U(F)$
of a boundary condenser
$F$
of~$U$
is a conformal invariant and using Lemma~\ref{l2.1}, we can
(applying additional conformal mappings if necessary) come to
the following situation:
$s$
and
$\widetilde{s}$
are subsets of the hyperplane
$\tau_{n - 1} = \{x \in \mathbb R^n : x_n = 0\}$,
and the sets
$\inter s$
and
$\inter \widetilde{s}$
have open neighborhoods
$D_1$
and
$D_2$
(respectively) such that
$D_j \cap \mathbb R^n_+ \subset U_j$
and
$D_j \cap \mathbb R^n_- \subset \mathbb R^n \setminus U_j$,
where
$j = 1,2$
and
$\mathbb R^n_- = \mathbb R^n \setminus (\cl \mathbb R^n_+)$.
Suppose that
$x_0 \in \sigma$,
$k \in \{2,3,\dots\}$,
and
$r$
is a sufficiently small positive number and consider an
($n - 1$)-dimensional ball
$B_{n - 1}(x_0,r) = \{x \in \mathbb R^n : |x - x_0| < r,
x_n = 0\}$
in~$\tau_{n-1}$ such that
$\cl B_{n - 1}(x_0,r) \subset \sigma$,
$\cl B_{n - 1}(f(x_0),L) \subset \widetilde{\sigma}$
($L = L(x_0,f,r) = \max\limits_{|x - x_0| = r,x \in \tau_{n - 1}}
|f(x) - f(x_0)|$),
$B^+_n(f(x_0),L) = \{y \in \mathbb R^n_+ : |y - f(x_0)| < L, y_n > 0\}
\subset U_2$,
and
\begin{equation}\label{eq3.8}
B^+_n(x_0,kL^*) = \{x \in \mathbb R^n : |x - x_0| < kL^*, x_n > 0\}
\subset \{D_1 \cap \mathbb R^n_+\} \quad (\subset U_1),
\end{equation}
where
\begin{equation}\label{eq3.9}
\{B_n(x_0,kL^*) \cap \tau_{n - 1}\} \subset \sigma
\end{equation}
and
$L^* = \max\{|x - x_0| : x \in E_1\}$,
moreover,
$E_1 = \{y \in \mathbb R^n : |y - f(x_0)| = L, y_n = 0\}$
(note that we can obtain~(\ref{eq3.8}) and~(\ref{eq3.9}) by using
the continuity of
$f$
and the smallness of the~values of
$r$).
The following estimate holds for the relative conformal modulus
$M^{U_2}(\{E_1,E_2\})$
of the boundary condenser
$\{E_1,E_2\}$
of~$U_2$, where
$E_2 = \{y \in \mathbb R^n : |y - f(x_0)| = l, y_n = 0\}$
($l = l(x_0,f,r) = \min\limits_{|x - x_0| = r,x \in \tau_{n - 1}}
|f(x) - f(x_0)|$):
\begin{equation}\label{eq3.10}
nv_n \biggl(\log\frac{L}{l}\biggr)^{1 - n} \ge M^{U_2}(\{E_1,E_2\}).
\end{equation}
This stems from the fact that the family
$\Gamma_{S_L,S_l,A}$
of all paths connecting the spheres
$S_L = \{y \in \mathbb R^n : |y - f(x_0)| = L\}$
and
$S_l = \{y \in \mathbb R^n : |y - f(x_0)| = l\}$
in the spherical ring
$A = \{y \in \mathbb R^n : l < |y - f(x_0)| < L\}$
minorizes the family
$\Gamma_{E_1,E_2,U_2}$
(i.e., for each path
$\gamma : [\alpha,\beta] \to \mathbb R^n$,
$\gamma \in \Gamma_{E_1,E_2,U_2}$,
there exists a segment
$[\varkappa,\delta]$
($\subset [\alpha,\beta]$)
such that
$\gamma|_{[\varkappa,\delta]} \in \Gamma_{S_L,S_l,A}$),
and from assertions~6.4 and~7.5 in~\cite{V}.

Now, estimate
$M^{U_1}(\{f^{-1}(E_1),f^{-1}(E_2)\})$
(which is equal to
$M^{U_2}(\{E_1,E_2\})$
since
$f$
preserves the relative conformal moduli of boundary condensers)
from below. To this end, note first that
$$
M^{U_1}(\{f^{-1}(E_1),f^{-1}(E_2)\}) \ge M_n(\Gamma_k),
$$
where
$\Gamma_k$
is the subfamily of the family
$\Gamma_{f^{-1}(E_1),f^{-1}(E_2),U_1}$
of all paths connecting the components of the boundary condenser
$\{f^{-1}(E_1),f^{-1}(E_2)\}$
of the domain
$U_1$
in this domain, which consists of the paths
$\gamma \in \Gamma_{f^{-1}(E_1),f^{-1}(E_2),U_1}$
such that
$\Im \gamma \subset \cl B^+_n(x_0,kL^*)$.
Furthermore, consider the family
$\Gamma_{f^{-1}(E_1),f^{-1}(E_2),\mathbb R^n_+}$
of paths connecting the components of the condenser
$\{f^{-1}(E_1),f^{-1}(E_2)\}$
in
$\mathbb R^n_+$,
which is also a boundary condenser for the half-space
$\mathbb R^n_+$.
It is clear that
\begin{equation}\label{eq3.11}
\Gamma_{f^{-1}(E_1),f^{-1}(E_2),\mathbb R^n_+} \subset \{\Gamma_k
\cup \Gamma^k\},
\end{equation}
where
$\Gamma^k$ is the subfamily of paths
$\gamma$
in
$\Gamma_{f^{-1}(E_1),f^{-1}(E_2),\mathbb R^n_+}$
satisfying the condition
$\{\Im \gamma \cap (\mathbb R^n \setminus B^+_n(x_0,kL^*))\}
\ne\varnothing$.
Moreover, since
$\Gamma^k$
is minorized by the family
$\Gamma_{S(x_0,kL^*),S(x_0,L^*),A^*}$
of all paths connecting the boundary spheres
$S(x_0,kL^*)$
and
$S(x_0,L^*)$
of the spherical ring
$A^* = \{x \in \mathbb R^n : L^* < |x - x_0| < kL^*\}$
in this ring, by the above-mentioned assertions~6.4 and~7.5
in~\cite{V}, we have
\begin{equation}\label{eq3.12}
M_n(\Gamma^k) \le nv_n \biggl(\log\frac{kL^*}{L^*}\biggr)^{1 - n}
= nv_n (\log k)^{1 - n} = \mu_k \to 0
\end{equation}
as
$k \to \infty$.
On the other hand,~(\ref{eq3.11}) and~(\ref{eq3.12}) imply
$$
M^{\mathbb R^n_+}(\{f^{-1}(E_1),f^{-1}(E_2)\}) \le
M_n(\Gamma_k) + M_n(\Gamma^k) \le M_n(\Gamma_k) + \mu_k.
$$
From these relations and Lemma~\ref{l2.3} it follows that
\begin{equation}\label{eq3.13}
M_n(\Gamma_k) \ge M^{\mathbb R^n_+}(\{f^{-1}(E_1),f^{-1}(E_2)\})
- \mu_k \ge \lambda_n - \mu_k,
\end{equation}
where
$\lambda_n$
is from~(\ref{eq3.3}). Involving also the fact that
$\lambda_n - \mu_k > 0$
when
$k$
is sufficiently large, and reckoning with~(\ref{eq3.10})
and~(\ref{eq3.13}), we easily obtain the relation
\begin{equation}\label{eq3.14}
\frac{L}{l} = \frac{L(x_0,f,r)}{l(x_0,f,r)} \le
\exp\biggl\{\biggl(\frac{nv_n}{\lambda_n -
\mu_k}\biggr)^{\frac{1}{n - 1}}\biggr\}.
\end{equation}
Passing to the limit first as
$r \to 0$
and then as
$k \to \infty$
in~(\ref{eq3.14}), we get
\begin{multline}\label{eq3.15}
H(x,f) = \limsup_{r \to 0}\frac{L(x,f,r)}{l(x,f,r)} \le \Lambda_n =
\\
\exp\biggl\{\biggl(\frac{8nv_n}{(n - 1)v_{n - 1}\log 3}
\biggr)^{\frac{1}{n - 1}} \frac{[\Gamma(\frac{1}{2(n - 1)})]^2}
{\Gamma(\frac{1}{n - 1})}\biggr\}
\end{multline}
for
$x \in \sigma$.
From~(\ref{eq3.15}) we see that
$f|_{\sigma}$
is an
($n - 1$)-dimensional
quasiconformal mapping (moreover, for the same reasons, the
inverse mapping
$(f|_{\sigma})^{-1}$
is also quasiconformal). Following the proof of Theorem~8.1
in~\cite{Ko2} and making necessary corrections
to it concerned with the specific nature of the general case discussed
in Theorem~\ref{t2.1}, we will now show that
$f|_{\sigma}$
is a conformal mapping.

Indeed, assume that
$x_0 \in \sigma$
is a nondegenerate differentiability point of
$f|_{\sigma}$,
i.e.,
$x_0$
is a point at which
$f$
is differentiable, moreover, the value
$J(x_0,f)$
of its Jacobian at
$x_0$
is nonzero (by the just-proven quasiconformality of
$f|_{\sigma}$,
$\mes_{n - 1}$-almost
all points
$x \in \sigma$
have this property; here
$\mes_{n - 1}$
is the
($n - 1$)-dimensional
Lebesgue measure), and suppose that the differential
$f'(x_0)$
is not a conformal mapping. Consider points
$e_1,e_2 \in \tau_{n - 1}$
such that
$|e_j| = 1$
($j = 1,2$)
and
$|f'(x_0)e_1| = \max\limits_e |f'(x_0)e| > |f'(x_0)e_2| =
\min\limits_e |f'(x_0)e|$,
where the maximum and minimum are calculated over the set of all vectors
$e \in \tau_{n - 1}$
with
$|e| = 1$.
By the conformal invariance of the relative conformal moduli of
boundary condensers, we can assume that
$e_1,e_2,\dots,e_n$
is the canonical basis in
$\mathbb R^n$;
$s,\widetilde{s} \subset \tau_{n - 1}$
(as above,
$s$
and
$\widetilde{s}$
are
($n - 1$)-dimensional
faces of the boundaries
$\fr U_1$
and
$\fr U_2$
of the polyhedrons
$\cl U_1$
and
$\cl U_2$
containing
$\sigma$
and
$\widetilde{\sigma} = f(\sigma)$
respectively);
$x_0 = f(x_0) = 0$;
$B^+_n(0, \sqrt{1 + t^2_n}) (= B_n(0,\sqrt{1 + t^2_n}) \cap
\mathbb R^n_+) \subset D_1 \cap \mathbb R^n_+  \subset U_1$,
$\{B_n(0,\sqrt{1 + t^2_n}) \cap \tau_{n - 1}\} \subset \sigma$;
$B^+_n(0,\sqrt{1 + (\Lambda_nt_n)^2}) \subset D_2 \cap
\mathbb R^n_+ \subset U_2$,
$\{B_n(0,\sqrt{1 + (\Lambda_nt_n)^2}) \cap \tau_{n - 1}\} \subset
\widetilde{\sigma}$,
$f'(0)e_2 = e_2$,
and
$f'(0)e_1 = ue_1$
($1 < u \le \Lambda_n$).
Here
$\Lambda_n$
is defined by~(\ref{eq3.15}) and
$t_n$
is a~growth point (see Remark~2.5) of the function
$t \mapsto M^{\mathbb R^n_+}(A_t)$,
$0 < t < \infty$,
where
$A_t$
is the boundary condenser with components~(\ref{eq3.5})
and~(\ref{eq3.6}).

Starting from this situation, consider the
parameter
$\mu = 2,3,\dots$
and the boundary condenser
\begin{multline*}
\mu^{-1}A_{t_n} = \{F^{\mu}_1,F^{\mu}_2\} =
\\
\{\mu^{-1}F_1,\mu^{-1}F_2\} = \{\{x \in \mathbb R^n : \mu x \in F_1\},
\{x \in \mathbb R^n : \mu x \in F_2\}\}
\end{multline*}
of the half-space $\mathbb R^n_+$,
where
$F_j$
($j = 1,2$)
are the components of the boundary condenser
$A_{t_n}$
of $\mathbb R^n_+$ defined by~(\ref{eq3.5}) and~(\ref{eq3.6}) for
$t = t_n$,
and then construct the mapping
$f_{\mu} : \fr U_1 \to \fr U_2$
by setting
$f_{\mu}(x) =\mu f(\mu^{-1}x)$
where
$x \in \fr(\mu U_1)$.
By the nondegenerate differentiability of~$f|_{\sigma}$
(and hence that of the inverse mapping
$(f|_{\sigma})^{-1}$) at~$0$, we have
\begin{equation}\label{eq3.16}
f_{\mu}(x) = Lx + |x|\alpha(\mu^{-1}x), \quad x \in \fr(\mu U_1),
\end{equation}
and
\begin{equation}\label{eq3.17}
f^{-1}_{\mu}(x) (= \mu f^{-1}(\mu^{-1}x)) =
L^{-1}x + |x|\beta(\mu^{-1}x), \quad x \in \fr(\mu U_2).
\end{equation}
Note that, in~(\ref{eq3.16}) and~(\ref{eq3.17}),
$L = f'(0)$
is the derivative (differential) of the mapping
$f$
at the point
$0$,
\begin{equation}\label{eq3.18}
L\biggl(\frac{\tau e_1  \pm e_2}{2}\biggr) =
\frac{u\tau e_1 \pm e_2}{2}, \quad \tau \in \mathbb R,
\end{equation}
and
\begin{equation}\label{eq3.19}
\lim_{x \to 0, x \in \tau_{n - 1}}\alpha(x) = 0, \quad
\lim_{x \to 0, x \in \tau_{n - 1}}\beta(x) = 0;
\end{equation}
moreover, the mappings
$\alpha$
and
$\beta$
are independent of
$\mu$.

Consider the boundary condenser
$f^{-1}_{\mu}(A_{ut_n})$
of
$\mathbb R^n_+$
with the components
$f^{-1}_{\mu}(F_j(ut_n))$
($j = 1,2$),
where
$F_j$
are defined by~(\ref{eq3.5}) and~(\ref{eq3.6}). For this
condenser, we have
\begin{equation}\label{eq3.20}
\Gamma_{f^{-1}_{\mu}(F_1(ut_n)),f^{-1}_{\mu}(F_2(ut_n)),\mu U_1}
\subset \{\Gamma_{f^{-1}_{\mu}(F_1(ut_n)),f^{-1}_{\mu}(F_2(ut_n)),
\mu B^+_n(0,\sqrt{1 + t^2_n})} \cup \Gamma^*_{\mu}\},
\end{equation}
where
$\Gamma^*_{\mu}$
is the subfamily of paths
$\gamma$
in
$\Gamma_{f^{-1}_{\mu}(F_1(ut_n)),f^{-1}_{\mu}(F_2(ut_n)),\mu U_1}$
such that
$\Im \gamma \cap (\mathbb R^n \setminus
\mu B_n(0,\sqrt{1 + t^2_n})) \ne\varnothing$;
moreover, from the fact that $\Gamma^*_{\mu}$
is minorized by the family of all paths connecting the
boundary spheres of the spherical ring
$\{x \in \mathbb R^n : \sqrt{1 + t^2_n} < |x| <
\mu \sqrt{1 + t^2_n}\}$ in this ring
we (by the same arguments as those used to
deduce~(\ref{eq3.12})) obtain the inequality
\begin{equation}\label{eq3.21}
M_n(\Gamma^*_{\mu}) \le \frac{nv_n}{(\log \mu)^{n - 1}}.
\end{equation}

On the other hand, we can easily verify that
\begin{equation}\label{eq3.22}
\Gamma_{f^{-1}_{\mu}(F_1(ut_n)),f^{-1}_{\mu}(F_2(ut_n)),
\mu B^+_n(0,\sqrt{1 + t^2_n})} \subset
\Gamma_{f^{-1}_{\mu}(F_1(ut_n)),f^{-1}_{\mu}(F_2(ut_n)),
\mathbb R^n_+}.
\end{equation}
Therefore (by~(\ref{eq3.20})-(\ref{eq3.22})),
\begin{equation}\label{eq3.23}
M^{\mu U_1}(f^{-1}_{\mu}(A_{ut_n})) \le
M^{\mathbb R^n_+}(f^{-1}_{\mu}(A_{ut_n})) +
\frac{nv_n}{(\log \mu)^{n - 1}}.
\end{equation}

Now,
\begin{equation}\label{eq3.24}
\Gamma_{F_1(ut_n),F_2(ut_n),\mathbb R^n_+} \subset
\{\Gamma_{F_1(ut_n),F_2(ut_n),\mu B^+_n(0,\sqrt{1 +
(\Lambda_nt_n)^2})} \cup \bar{\Gamma}_{\mu}\},
\end{equation}
where
$\bar{\Gamma}_{\mu}$
is the subset of paths
$\gamma \in \Gamma_{F_1(ut_n),F_2(ut_n),\mathbb R^n_+}$
satisfying the condition
$\Im \gamma \cap \{\mathbb R^n \setminus
(\mu B_n(0,\sqrt{1 + (\Lambda_nt_n)^2}))\} \ne\varnothing$.
Since
$\bar{\Gamma}_{\mu}$
is minorized by the family of all paths connecting the boundary
spheres of the spherical ring
$\{x \in \mathbb R^n : \sqrt{1 + (\Lambda_nt_n)^2} < |x| <
\mu \sqrt{1 + (\Lambda_nt_n)^2}\}$
in this ring, it follows that, by repeating the arguments used
in deriving~(\ref{eq3.12}) and~(\ref{eq3.21}), we get the estimate
\begin{equation}\label{eq3.25}
M_n(\bar{\Gamma}_{\mu}) \le \frac{nv_n}{(\log \mu)^{n - 1}}.
\end{equation}

Using the obvious relation
$$
\Gamma_{F_1(ut_n),F_2(ut_n),\mu B^+_n(0,\sqrt{1 +
(\Lambda_nt_n)^2)}} \subset \Gamma_{F_1(ut_n),F_2(ut_n),\mu U_2},
$$
and~(\ref{eq3.24}) and~(\ref{eq3.25}), we have
$$M^{\mathbb R^n_+}(A_{ut_n}) \le M^{\mu U_2}(A_{ut_n}) +
\frac{nv_n}{(\log \mu)^{n - 1}}.
$$
Involving also~(\ref{eq3.23}) and the circumstance that
the mapping
$f_{\mu}$
(together with~$f$)
preserves the relative conformal moduli of boundary
condensers, we have
\begin{equation}\label{eq3.26}
M^{\mathbb R^n_+}(A_{ut_n}) \le
M^{\mu U_1}(f^{-1}_{\mu}(A_{ut_n})) + \frac{nv_n}
{(\log \mu)^{n - 1}} \le M^{\mathbb R^n_+}(f^{-1}_{\mu}(A_{ut_n})) +
\frac{2nv_n}{(\log \mu)^{n - 1}}.
\end{equation}

The proof of the conformality of
$f|_{\sigma}$
is finished by analogy with that of Theorem~8.1 in~\cite{Ko2}.
We will only confine ourselves to its brief exposition. First,
starting from~(\ref{eq3.16})-(\ref{eq3.19}), we arrive at the
estimate
\begin{equation}\label{eq3.27}
M^{\mathbb R^n_+}(f^{-1}(A_{ut_n})) \le M_n(\Gamma(\beta^*_{\mu})),
\end{equation}
where
\begin{equation}\label{eq3.28}
\beta^*_{\mu} = \frac{\sqrt{1 + (\Lambda_nt_n)^2}}{2}
\biggl\{\sup_{|y| \le \frac{\sqrt{1 + (\Lambda_nt_n)^2}}{2\mu}}
|\beta(y)|\biggr\} \to 0
\end{equation}
as
$\mu \to \infty$
(in what follows,
$\mu$
is so large that
$\beta^*_{\mu} < 1/2$)
and
$\Gamma(\tau) = \Gamma_{F^{\tau}_1,F^{\tau}_2,\mathbb R^n_+
\setminus \{F^{\tau}_1 \cup F^{\tau}_2\}}$
is the family of all paths connecting
$F^{\tau}_1$
and
$F^{\tau}_2$
in
$\mathbb R^n_+ \setminus \{F^{\tau}_1 \cup F^{\tau}_2\}$.
Here
$$
F^{\tau}_j = \{x \in \mathbb R^n : \dist(x,F_j) \le
\tau, x_n \ge 0\},
$$
$0 < \tau < 1/2$
($j = 1,2$),
moreover, the sets
$F_j$
are the components of the boundary condenser
$A(t)$
of
$\mathbb R^n_+$
defined by~(\ref{eq3.5}) and~(\ref{eq3.6}).

Finally, combining~(\ref{eq3.26}) and~(\ref{eq3.27}), we arrive
at the inequalities
\begin{equation}\label{eq3.29}
M^{\mathbb R^n_+}(A_{ut_n}) - \frac{2nv_n}{(\log \mu)^{n - 1}} \le
M_n(\Gamma(\beta^*_{\mu})),
\end{equation}
and then, letting
$\mu$
tend to
$\infty$,
apply Lemma~\ref{l2.5} to the right-hand side in~(\ref{eq3.29}).
In particular, this lemma implies the equality
$$
M^{\mathbb R^n_+}(A_{ut_n}) = \lim_{\tau \to 0} M_n(\Gamma(\tau)).
$$
Now, involving~(\ref{eq3.28}), we obtain the inequality
$$
M^{\mathbb R^n_+}(A_{ut_n}) \le M^{\mathbb R^n_+}(A_{t_n})
$$
which contradicts the fact that
$t_n$
is a~growth point of the function
$t \mapsto M^{\mathbb R^n_+}(A_t)$,
$0 < t < \infty$.
Therefore,
$f|_{\sigma}$
is a conformal mapping.

Note also that since
$\sigma$
is a connected component of the set
$(\inter_{\fr U_1}f^{-1}(\widetilde{s})) \cap \inter s$,
$\widetilde{\sigma} = f(\sigma)$
and
$f^{-1}$
preserves the relative conformal moduli of boundary
condensers, $f^{-1}|_{\sigma}$ is also conformal.

The next step in proving the theorem is the proof of the
equality
$\cl \sigma = s$.

To this end, assume that
$s \setminus \cl \sigma \ne\varnothing$
and then consider a point
$x_0 \in \{(\fr_s \sigma) \cap (\inter s)\}$.
The image
$y_0 = f(x_0)$
of this point belongs to the set
$\partial \widetilde{s}$,
moreover, by the continuity of~$f^{-1}$,
there exists an
$n$-dimensional
ball
$B(y_0,r)$
such that
$f^{-1}(B(y_0,r) \cap \partial \widetilde{s}) \subset
\{(\fr_s \sigma) \cap (\inter s)\}$.
In the set
$B(y_0,r) \cap \partial \widetilde{s}$,
there is a point
$y^*_0$
belonging to the interior
$\inter v$
of a certain
($n - 2$)-dimensional
cell
$v \in \Theta_2$,
where
$\Theta_2$
is the (chosen and fixed a~priori) finite set of pairwise
nonoverlapping
($n - 2$)-dimensional
cells whose union is the boundary
$\varkappa = \fr_{\fr U_2} \widetilde{s}$
of the face
$\widetilde{s}$
(see Lemma~\ref{l2.2}).
Let
$x^*_0 = f^{-1}(y^*_0)$.
Basing on the conformal invariance of the relative conformal
moduli of boundary condensers, assume that
$x^*_0$
is just the initially-chosen point
$x_0$ and, what is more,
$x_0 = f(x_0) = 0$;
$s,\widetilde{s} \subset \tau_{n - 1}$;
the set
$\inter \widetilde{s}$
has an open neighborhood
$D_2$
such that
$(D_2 \cap \mathbb R^n_+) \subset U_2$
and
$(D_2 \cap \mathbb R^n_-) \subset \mathbb R^n \setminus U_2$,
and
$v \subset \{x \in \mathbb R^n : x_{n - 1} = x_n = 0\}$.
Moreover, since the condition
$n \ge 4$
and the well-known properties of space conformal mappings imply
that the~mapping
$f|_{\sigma}$
is a restriction to
$\sigma$
of a certain M\"obius mapping
$h : \bar{\mathbb R}^n \to \bar{\mathbb R}^n$,
we can also assume that
$\widetilde{\sigma} = \sigma$
and
$f|_{\sigma} = \Id \sigma$.

Further, suppose that a number
$r_0 > 0$
satisfies the condition
$\{B(0,r_0) \cap V_{\pi}\} \subset \{D_1 \cap \mathbb R^n_+\}
\subset U_1$
and condition~(\ref{eq3.10}) where now
$x_0 = 0$
and
$r = r_0$,
and let
$\contg_{U_2} 0 = V_{\alpha_2}$
($\alpha_2 \in (]0,\pi[ \cup ]\pi,2\pi[)$);
moreover,
$V_{\alpha}$
is the domain defined for
$\alpha \in ]0,2\pi[$
by~(\ref{eq3.2}).
Setting
$r_0 = 2$
(which is possible because of the conformal invariance of the
relative conformal moduli of boundary condensers), construct
the sequence
$\{f_{\mu}\}_{\mu = 2,3,\dots}$
of the mappings
$f_{\mu} : \fr (\mu U_1) \to \fr (\mu U_2)$,
where (as above)
$f_{\mu}(x) = \mu f(\mu^{-1}x)$,
$x \in \fr (\mu U_1)$.
The mapping
$f_{\mu}$
has the following properties:
\begin{equation}\label{eq3.30}
\{B(0,2\mu) \cap v\} \subset \mu v,
\end{equation}
\begin{equation}\label{eq3.31}
\{B(0,2\mu) \cap V_{\pi}\} \subset \mu U_1,
\end{equation}
\begin{equation}\label{eq3.32}
\{B(0,2\mu) \cap V_{\alpha_2}\} \subset \mu U_2
\end{equation}
and
\begin{equation}\label{eq3.33}
f_{\mu}|_{\mu \sigma} = \Id (\mu \sigma).
\end{equation}

Starting from the mapping
$f_{\mu}$
and taking into account~(\ref{eq3.30})-(\ref{eq3.33}), for the
boundary condenser
$A_{\mu}$
of the domains
$\mu U_j$
($j = 1,2$)
whose components are the sets
$$
F^{\mu}_1 = \{x \in \mathbb R^n : -1 \le x_1 \le 0, x_{\nu} = 0,
\nu = 2,3,\dots,n\}
$$
and
$$
F^{\mu}_2 = \{x \in \mathbb R^n : 1 \le x_1 \le \mu, x_{\nu} = 0,
\nu = 2,3,\dots,n\},
$$
we now obtain the relations
\begin{equation}\label{eq3.34}
M^{V_{\pi}}(A) \le M^{\mu U_1}(A_{\mu}) + M_n(\Gamma_{\mu}) +
M_n(A^*_{\mu},\mathbb R^n)
\end{equation}
$$
(M^{V_{\alpha_2}}(A) \le M^{\mu U_2}(A_{\mu}) + M_n(\Gamma_{\mu})
+ M_n(A^*_{\mu},\mathbb R^n)),
$$
where
$A$
is the boundary condenser of the domains
$V_{\pi}$
and
$V_{\alpha_2}$
(defined above by~(\ref{eq3.2})) with the components
\begin{equation}\label{eq3.35}
F_1 = \{x \in \mathbb R^n : -1 \le x_1 \le 0, x_{\nu} = 0,
\nu = 2,3,\dots,n\}
\end{equation}
and
\begin{equation}\label{eq3.36}
F_2 = \{x \in \mathbb R^n : 1 \le x_1 \le \infty, x_{\nu} = 0,
\nu = 2,3,\dots,n\},
\end{equation}
$\Gamma_{\mu}$
is the family of paths
$\gamma$
connecting of
$F_1$
and
$F_2$
in
$\mathbb R^n \setminus \{F_1 \cup F_2\}$
and such that
$\Im \gamma \cap \{\mathbb R^n \setminus B(0,2\mu)\} \ne\varnothing$;
finally,
$A^*_{\mu}$
is the condenser in
$\mathbb R^n$
whose components are the sets
$F^{\mu}_1$
and
$$
F^{\mu *}_2 = \{x \in \mathbb R^n : \mu \le x_1 < \infty, x_{\nu} = 0,
\nu = 2,3,\dots,n\},
$$
moreover,
$$
M_n(A^*_{\mu},\mathbb R^n) = M_n(\Gamma_{F^{\mu}_1,F^{\mu *}_2,
\mathbb R^n \setminus \{F^{\mu}_1 \cup F^{\mu *}_2\}}).
$$

Indeed,~(\ref{eq3.30})-(\ref{eq3.33}) imply the relations
$$
\Gamma_{F_1,F_2,V_{\pi}} \subset \Gamma_{F^{\mu}_1,F^{\mu}_2,
V_{\pi}} \cup \Gamma_{F^{\mu}_1,F^{\mu *}_2,\mathbb R^n \setminus
\{F^{\mu}_1 \cup F^{\mu *}_2\}}
$$
$$
(\Gamma_{F_1,F_2,V_{\alpha_2}} \subset \Gamma_{F^{\mu}_1,
F^{\mu}_2,V_{\alpha_2}} \cup \Gamma_{F^{\mu}_1,F^{\mu *}_2,
\mathbb R^n \setminus \{F^{\mu}_1 \cup F^{\mu *}_2\}})
$$
and
$$\Gamma_{F^{\mu}_1,F^{\mu}_2,V_{\pi}} \subset \Gamma_{F^{\mu}_1,
F^{\mu}_2,\mu U_1} \cup \Gamma_{\mu}
$$
$$
(\Gamma_{F^{\mu}_1,F^{\mu}_2,V_{\alpha_2}} \subset
\Gamma_{F^{\mu}_1,F^{\mu}_2,\mu U_2} \cup \Gamma_{\mu}).
$$
Thus, by Theorem~6.2 in~\cite{V}, we have
$$
M^{V_{\pi}}(A) \le M^{V_{\pi}}(A_{\mu}) +
M_n(A^*_{\mu},\mathbb R^n) \le M^{\mu U_1}(A_{\mu}) +
M_n(\Gamma_{\mu}) + M_n(A^*_{\mu},\mathbb R^n)
$$
$$
(M^{V_{\alpha_2}}(A) \le M^{V_{\alpha_2}}(A_{\mu}) +
M_n(A^*_{\mu},\mathbb R^n) \le M^{\mu U_2}(A_{\mu}) +
M_n(\Gamma_{\mu}) + M_n(A^*_{\mu},\mathbb R^n)).
$$

Taking into account that the families
$\Gamma_{F^{\mu}_1,F^{\mu *}_2,\mathbb R^n \setminus \{F^{\mu}_1
\cup F^{\mu *}_2\}}$
and
$\Gamma_{\mu}$
are minorized by the families
$\Gamma_{S_1,S'_2,A'_{\mu}}$
and
$\Gamma_{S_1,S''_2,A''_{\mu}}$
of paths connecting the boundary spheres
$S_1 = S_{n - 1}(0,1)$
and
$S'_2 = S_{n - 1}(0,\mu)$
in the spherical ring
$A'_{\mu} = \{x \in \mathbb R^n : 1 < |x| < \mu\}$
and the boundary spheres
$S_1$
and
$S''_2 = S_{n - 1}(0,2\mu)$
in the spherical ring
$A''_{\mu} = \{x \in \mathbb R^n : 1 < |x| < 2\mu\}$,
respectively, by Theorems~6.2,~6.4 and~7.5 in~\cite{V}, we
obtain
\begin{equation}\label{eq3.37}
M_n(A^*_{\mu},\mathbb R^n) \le nv_n (\log \mu)^{1 - n}
\end{equation}
and
\begin{equation}\label{eq3.38}
M_n(\Gamma_{\mu}) \le nv_n \{\log (2\mu)\}^{1 - n} <
nv_n(\log \mu)^{1 - n}.
\end{equation}
Inequalities~(\ref{eq3.34}),~(\ref{eq3.37}) and~(\ref{eq3.38})
imply the relations
\begin{equation}\label{eq3.39}
M^{V_{\pi}}(A) \le M^{\mu U_1}(A_{\mu}) + 2nv_n (\log \mu)^{1 - n}
\end{equation}
$$
(M^{V_{\alpha_2}}(A) \le M^{\mu U_2}(A_{\mu}) +
2nv_n(\log \mu)^{1 - n}).
$$

On the other hand,
$$
\Gamma_{F^{\mu}_1,F^{\mu}_2,\mu U_1} \subset \Gamma_{F^{\mu}_1,
F^{\mu}_2,B(0,2\mu) \cap V_{\pi}} \cup \Gamma^{\mu} \subset
\Gamma_{F^{\mu}_1,F^{\mu}_2,V_{\pi}} \cup \Gamma^{\mu}
$$
$$
(\Gamma_{F^{\mu}_1,F^{\mu}_2,\mu U_2} \subset \Gamma_{F^{\mu}_1,
F^{\mu}_2,B(0,2\mu) \cap V_{\alpha_2}} \cup \Gamma^{\mu} \subset
\Gamma_{F^{\mu}_1,F^{\mu}_2,V_{\alpha_2}} \cup \Gamma^{\mu}),
$$
where
$\Gamma_{F^{\mu}_1,F^{\mu}_2,B(0,2\mu) \cap V_{\pi}}$
($\Gamma_{F^{\mu}_1,F^{\mu}_2,B(0,2\mu) \cap V_{\alpha_2}}$)
is the subfamily of paths in
$\Gamma_{F^{\mu}_1,F^{\mu}_2,\mu U_1}$
($\Gamma_{F^{\mu}_1,F^{\mu}_2,\mu U_2}$)
whose images are in the ball
$B(0,2\mu)$
(note that here we have reckoned with~(\ref{eq3.31})
((\ref{eq3.32}))), and
$\Gamma^{\mu}$
is the subfamily of all paths~$\gamma$
in the same family such that
$\Im \gamma \cap \{\mathbb R^n \setminus B(0,2\mu)\}
\ne\varnothing$;
moreover, just as $\Gamma_{\mu}$, the family $\Gamma^{\mu}$
is minorized by
$\Gamma_{S_1,S''_2,A''_{\mu}}$.
Hence,
\begin{equation}\label{eq3.40}
M^{\mu U_1}(A_{\mu}) \le M^{V_{\pi}}(A_{\mu}) +
nv_n \{\log(2\mu)\}^{1 - n} \le M^{V_{\pi}}(A) +
nv_n (\log \mu)^{1 - n}.
\end{equation}
Combining~(\ref{eq3.39}) and~(\ref{eq3.40}), we finally
prove that
$$
M^{V_{\pi}}(A) - 2nv_n (\log \mu)^{1 - n} \le
M^{\mu U_1}(A_{\mu}) \le M^{V_{\pi}}(A) + nv_n (\log \mu)^{1 - n},
$$
which in turn implies the relation
\begin{equation}\label{eq3.41}
\lim_{\mu \to \infty}M^{\mu U_1}(A_{\mu}) = M^{V_{\pi}}(A).
\end{equation}

Similar arguments also enable us to obtain the inequalities
$$
M^{V_{\alpha_2}}(A) - 2nv_n (\log \mu)^{1 - n} \le
M^{\mu U_2}(A_{\mu}) \le M^{V_{\alpha_2}}(A) +
nv_n (\log \mu)^{1 - n},
$$
which imply the equality
\begin{equation}\label{eq3.42}
\lim_{\mu \to \infty}M^{\mu U_2}(A_{\mu}) = M^{V_{\alpha}}(A).
\end{equation}

Next, the fact that
$f_{\mu}$
(together with
$f$)
preserves the relative conformal moduli of boundary condensers
imply the equality
\begin{equation}\label{eq3.43}
M^{\mu U_1}(A_{\mu}) = M^{\mu U_2}(A_{\mu}).
\end{equation}
Thus, by~(\ref{eq3.41})-(\ref{eq3.43}),
$$
M^{V_{\alpha_2}}(A) = M^{V_{\pi}}(A).
$$
At the same time, Lemma~\ref{l2.6} and the condition
$\alpha_2 \in (]0,\pi[ \cup ]\pi,2\pi[)$
imply the inequality
$$
(0 <)\,\, M^{V_{\alpha_2}}(A) \ne M^{V_{\pi}}(A).
$$

The so-obtained contradiction completes the proof of the equality \linebreak
$\cl \sigma = s$.
It should be noted that, taking $f^{-1}$
instead of
$f$
in the above-mentioned arguments,
we also establish the equality
$\cl \widetilde{\sigma} = \widetilde{s}$.
Hence,
$f$
generates a bijection between the sets of all
($n - 1$)-dimensional
faces of the boundaries
$\fr U_1$
and
$\fr U_2$
of
$U_1$
and
$U_2$.

Turning to the final step in the proof of the theorem, choose an~arbitrary
($n - 1$)-dimensional
face
$s_1$
of the boundary
$\fr U_1$
of the polyhedron
$\cl U_1$.
As above, we may assume that
$s_1 \subset \tau_{n - 1}$,
$f|_{s_1} = \Id s_1$,
$(D_j \cap \mathbb R^n_+) \subset U_j$
and
$(D_j \cap \mathbb R^n_-) \subset \mathbb R^n \setminus U_j$
($j = 1,2$)
($D_j$
are the open neighborhoods of the~faces
$s_1$
and
$\widetilde{s}_1 = f(s_1)$
defined for the domains
$U_j$
by Lemma~\ref{l2.1}). Let
$s_2$
be an
($n - 1$)-dimensional
face of the boundary
$\fr U_1$
such that the intersection
$s_1 \cap s_2$
of
$s_1$
and
$s_2$
contains an
($n - 2$)-dimensional
cell
$v_0$
(from an a~priori fixed finite set
$\Theta$
of pairwise nonoverlapping
($n - 2$)-dimensional
cells whose union is
$\fr_{\fr U_1} s_1$
(see Lemma~\ref{l2.2})). We assert that
$f(s_2) = s_2$.
Indeed, since
$f|_{\inter s_2}$
is a conformal mapping of the
($n - 1$)-dimensional
domain
$\inter s_2$
onto the
($n - 1$)-dimensional
domain
$f(\inter s_2)$,
the condition
$n \ge 4$
and the properties of space conformal mappings imply that
$f|_{s_2} = h|_{s_2}$,
where
$h : \bar{\mathbb R}^n \to \bar{\mathbb R}^n$
is a M\"obius transformation. Taking into account the relation
$f|_{v_0} = \Id v_0$,
we conclude that
$h$
is an isometric mapping of
$\mathbb R^n$.
Let
$x_0 \in \inter v_0$.
Repeating the arguments used above for proving the equality
$\cl \sigma = s$
almost verbatim and applying Lemmas~\ref{l2.2} and~\ref{l2.6},
we obtain the equality
$\contg_{U_1} x_0 = \contg_{U_2} x_0$\,\,
($= \contg_{U_2} f(x_0)$),
from which (and what was said above) we have the desired equality
$f(s_2) = s_2$.

Continuing these arguments by induction, say, at
$l${th}
step, we will either establish the conformal equivalence of the
domains
$U_1$
and
$U_2$
or obtain the following situation: there exists a proper subset
$\{s_{\nu} : \nu = 1,2,\dots,l\}$
of the set of all
($n - 1$)-dimensional
faces of the boundary
$\fr U_1$
such that
$$
f\bigl|_{\bigcup\limits_{\nu = 1}^l s_{\nu}} =
(P \circ L)\bigl|_{\bigcup\limits_{\nu = 1}^l s_{\nu}},
$$
where
$P : \mathbb R^n \to \mathbb R^n$
is an isometry and
$L$
is a M\"obius transformation. Show that, in the so-obtained
situation, we can make at least one more step.

Indeed, consider the set
$\{s_{\nu} : \nu = l + 1,l + 2,\dots,m\}$
of all remaining
($n - 1$)-dimensional
faces of the boundary
$\fr U_1$.
Applying to this set Lemma~\ref{l2.2} and comparing it with the
set
$\{s_{\nu} : \nu = 1,2,\dots,l\}$,
it is easy to conclude that there are faces
$s_{\nu_1} \in \{s_{\nu} : \nu = 1,2,\dots,l\}$
and
$s_{\nu_2} \in \{s_{\nu} : \nu = l + 1,l + 2,\dots,m\}$
such that their intersection contains
an ($n - 2$)-dimensional
cell
$v_0$.
As a result, for the pair of
($n - 1$)-dimensional
faces
$s_{\nu_1}$
and
$s_{\nu_2}$,
we find ourselves in the situation described above (at the~first
step) for~$s_1$ and~$s_2$.
Therefore, it is not difficult to conclude that
$$
f\bigl|_{\bigcup\limits_{\nu = 1}^{l + 1} s_{\nu}} =
(P \circ L)\bigl|_{\bigcup\limits_{\nu = 1}^{l + 1} s_{\nu}}
$$
where now
$s_{l + 1} = s_{\nu_2}$.
Continuing our arguments by induction and taking into account the
finiteness of the set of all
($n - 1$)-dimensional
faces of the boundary
$\fr U_1$
of the domain
$U_1$,
we finally obtain the conformal equivalence of~$U_1$
and~$U_2$.

The existence of a similarity transformation
$P : \mathbb R^n \to \mathbb R^n$
satisfying the condition
$U_2 = P(U_1)$
can be established in the same way as in the proof of Theorem~8.1
in~\cite{Ko2}. Namely, if
$H : U_1 \to \mathbb R^n$
is a conformal mapping from~$U_1$
into
$\mathbb R^n$
that is not the restriction to
$U_1$
of a similarity transformation then the image
$\widetilde{H}(s)$
of at least one of the
($n - 1$)-dimensional
faces
$s$
of the boundary
$\fr U_1$
under a conformal mapping
$\widetilde{H} : \bar{\mathbb R}^n \to \bar{\mathbb R}^n$
such that
$H = \widetilde{H}|_{U_1}$
is a subset of a certain sphere
$S_{n - 1}(x,r)$,
$x \in \mathbb R^n$,
$r \in \mathbb R_+$.
But this is impossible because
$H(U_1) \in \mathcal P_1$.
Thus, the theorem is completely proved.
\end{proof}

Further, let a domain
$U \subset \mathbb R^n$
($n \ge 3$)
be such that there exist a convex domain
$V \subset \mathbb R^n$
and an at most countable set
$\Lambda = \{\lambda_j\}$
of hyperplanes
$\lambda_j$
satisfying the following conditions: ${\rm(i)}$ the intersection
$s_j = \lambda_j \cap \fr V$
of each hyperplane
$\lambda_j \in \Lambda$
with the boundary
$\fr V$
of the domain
$V$
is an
($n - 1$)-dimensional
convex set;
${\rm(ii)}$
$\fr V = (\bigcup\limits_j s_j) \bigcup (\bigcup\limits_{\nu = 1}^k
\{x_{\nu}\})$,
where the union
$E = \bigcup\limits_{\nu = 1}^k \{x_{\nu}\}$
is finite and consists of singletons
$\{x_{\nu}\}$,
moreover, if~$V$
is bounded then
$x_{\nu} \in \mathbb R^n$
for
$\nu = 1,2,\dots,k$,
and if
$V$
is unbounded then
$x_{\nu} \in \mathbb R^n$
for
$\nu = 1,2,\dots,k - 1$
and
$x_k = \infty$;
finally, for every neighborhood
$W$
of~$E$ in~$\bar{\mathbb R}^n$,
the relation
$\{(\fr V) \setminus W\} \cap s_j \ne\varnothing$
holds for at most finitely many subscripts
$j$;
${\rm(iii)}$
$U = \Phi(V)$,
where
$\Phi : \bar{\mathbb R}^n \to \bar{\mathbb R}^n$
is a homeomorphism with the following properties:
${\rm(\circ)}$
$\Phi(\infty) = \infty$,
${\rm(\circ\circ)}$
$\Phi|_{\mathbb R^n}$
is a bi-Lipschitz mapping, and
${\rm(\circ\circ\circ)}$
for each
$j$,
the restriction
$\Phi|_{s_j}$
coincides with the restriction
$\Phi_j|_{s_j}$
to
$s_j$
of some affine mapping
$\Phi_j : \mathbb R^n \to \mathbb R^n$.

We denote the class of all domains
$U$
of the form described above by
$\mathcal P_2 = \mathcal P_2(n)$.
Theorem~\ref{t2.1} is naturally supplemented by the following
assertion.

\begin{theorem}\label{t2.2}
If
$n \ge 4$
then every domain of class
$\mathcal P_2$
is uniquely determined in this class by the relative conformal
moduli of boundary condensers. Moreover,
$U$
can be determined in
$\mathcal P_2$
up to an additional similarity transformation.
\end{theorem}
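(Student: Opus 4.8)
The plan is to follow the proof of Theorem~\ref{t2.1} closely, because every domain $U \in \mathcal P_2$ has a boundary that is \emph{locally} of exactly the same type as the boundary of a domain in $\mathcal P_1$: away from the finite vertex set $E$, the boundary $\fr U = \Phi(\fr V)$ is a locally finite union of the flat $(n-1)$-faces $\Phi(s_j)$ (flat because $\Phi$ is affine on each $s_j$), and by condition~(ii) only finitely many of them meet the complement of any neighborhood of $E$. Thus Lemmas~\ref{l2.1} and~\ref{l2.2} apply verbatim at each interior face point and each edge point, the contingency $\contg_U x$ at an edge point being a wedge $\widetilde V_\alpha$. So I would start with domains $U_1, U_2 \in \mathcal P_2$ and a homeomorphism $f : \fr U_1 \to \fr U_2$ preserving the relative conformal moduli of boundary condensers, and aim to produce a similarity $P$ with $U_2 = P(U_1)$ and $f = P|_{\fr U_1}$.

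First I would establish conformality on each face. Fixing an $(n-1)$-face $s$ of $\fr U_1$ and a component $\sigma$ of $\inter s \cap \inter_{\fr U_1} f^{-1}(\widetilde s)$ for a matching face $\widetilde s$ of $\fr U_2$, I would argue exactly as in Theorem~\ref{t2.1}: use Lemma~\ref{l2.3} to bound the linear dilatation and obtain $H(x,f) \le \Lambda_n$ on $\sigma$, then run the growth-point argument built on Lemmas~\ref{l2.4} and~\ref{l2.5} to exclude a nonconformal differential at a nondegenerate differentiability point. This shows $f|_\sigma$ and $(f|_\sigma)^{-1}$ are $(n-1)$-dimensional conformal mappings, and since $n - 1 \ge 3$, Liouville's theorem forces $f|_\sigma$ to be the restriction of a Möbius transformation $h$ of $\bar{\mathbb R}^n$.

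Next I would prove the face-to-face correspondence $\cl \sigma = s$ together with preservation of dihedral angles, repeating the contradiction of Theorem~\ref{t2.1}: were $s \setminus \cl\sigma \ne \varnothing$, I would pick an edge point $x_0 \in \inter v$ (with $v$ an $(n-2)$-cell of $\fr_{\fr U_1} s$) whose image lies on the relative boundary of $\widetilde s$, normalize so that $x_0 = f(x_0) = 0$, $s, \widetilde s \subset \tau_{n-1}$ and $f|_\sigma = \Id \sigma$, then rescale by $\mu \to \infty$ and compare the modulus of the standard edge condenser $A$ in the wedges $V_\pi$ and $V_{\alpha_2} = \contg_{U_2} 0$. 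The estimates~(\ref{eq3.34})--(\ref{eq3.43}) carry over unchanged and give $M^{V_{\alpha_2}}(A) = M^{V_\pi}(A)$, which by Lemma~\ref{l2.6} (asserting $M^{V_\alpha}(A) = \frac{\alpha}{\pi} M^{V_\pi}(A)$) is impossible for $\alpha_2 \ne \pi$. Hence $\cl\sigma = s$, $f$ carries faces bijectively onto faces, and at every edge point $\contg_{U_1} x_0 = \contg_{U_2} f(x_0)$.

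Finally I would propagate rigidity by connectedness. Fixing a face $s_1$ on which, after normalization, $f = \Id$, any face $s_2$ sharing an $(n-2)$-cell $v_0$ with $s_1$ satisfies $f|_{s_2} = h|_{s_2}$ for a Möbius $h$; since $f|_{v_0} = \Id v_0$ and the dihedral angles match by the previous step, $h$ is forced to be an isometry and $f(s_2) = s_2$. The genuinely new feature relative to $\mathcal P_1$ is that the set of faces may be countably infinite and $U$ may be unbounded, so the finite induction of Theorem~\ref{t2.1} must be replaced by a connectedness argument, and this is where I expect the main obstacle: one must check that condition~(ii) (local finiteness of the face decomposition away from the finite vertex set $E$), together with connectedness of $\fr V$ and $n - 1 \ge 3$, guarantees that the edge-adjacency graph of the faces of $\fr U_1 \setminus E$ is connected, so that every face is reachable from $s_1$ through a chain of edge-adjacent faces that avoids $E$, and that the point at infinity (when $V$ is unbounded) causes no difficulty, being handled by the conformal invariance of the moduli and the normalization $\Phi(\infty) = \infty$. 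Propagating across shared edges then shows $f$ agrees on all of $\fr U_1$ with the restriction of a single map $P \circ L$ ($P$ an isometry, $L$ Möbius), and the concluding argument of Theorem~\ref{t2.1} applies: were $P \circ L$ not a similarity it would bend some flat face $\Phi(s_j)$ onto a sphere, contradicting $U_2 \in \mathcal P_2$, so $P \circ L$ is a similarity $P'$ with $U_2 = P'(U_1)$.
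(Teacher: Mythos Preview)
Your proposal is correct and follows essentially the same route as the paper. The paper packages the propagation step you flag as the main obstacle via an exhaustion: it chooses a decreasing sequence of neighborhoods $W_\nu$ of the finite vertex set $E$ with $\fr V \setminus W_\nu$ connected (so that by condition~(ii) only finitely many faces meet it), applies the finite induction of Theorem~\ref{t2.1} verbatim on each $\Phi(\fr V \setminus W_\nu)$ to obtain a similarity $P_\nu$ with $f|_{\Phi(\fr V \setminus W_\nu)} = P_\nu|_{\Phi(\fr V \setminus W_\nu)}$, and then lets $\nu \to \infty$.
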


Though the structure of the class
$\mathcal P_2$
is similar to that of
$\mathcal P_1$,
it still contains unbounded domains of polyhedral
type. Thus, Theorem~\ref{t2.2} makes it possible to waive not only
the convexity but also the boundedness of domains
in Theorem~\ref{t2.1}.

\textbf{Remark~2.8.}
If the components of a boundary condenser
$F$
of a domain~$U$
are connected then this condenser is a ring (in the sense
of~\cite{V}). It is well known that, in this case, the relative
conformal modulus of the condenser
$F$
is equal to its relative conformal capacity
$\capac^U (F) = \capac^U_n (F)$,
i.e., its
$n$-capacity
with respect to the domain
$U$
(see, e.g.,~\cite{Ge4} for the definition of the
$p$-capacity
of a ring and its very close relationship with the theory of
quasiconformal and quasi-isometric (bi-Lipschitz) mappings). The
proofs of Theorems~\ref{t2.1} and~\ref{t2.2} use only ring-shaped
boundary condensers. This allows us to reformulate
Theorem~\ref{t2.1} as follows:

\newtheorem*{theorem-shtrikh}{Theorem 2.1$'$}

\begin{theorem-shtrikh}
If
$n \ge 4$
then any domain
$U$
in
$\mathbb R^n$
belonging to the class
$\mathcal P_1(n)$
and having connected boundary is uniquely determined in this
class by the relative conformal capacities of its ring-shaped
boundary condensers.
\end{theorem-shtrikh}

Theorem~\ref{t2.2} admits a similar reformulation.
\vskip4mm

\begin{proof}[Proof of Theorem~{\rm\ref{t2.2}}]
The proof follows the lines of the proof of Theorem~\ref{t2.1}.

In this case, a certain peculiarity appears by the fact
that for the set~$E$
of item {\rm(ii)} of the definition of the~convex domain
$V$
which has properties {\rm(i)} and {\rm(ii)} in the definition of
class
$\mathcal P_2$
and is connected with the domain
$U_1$
(here
$U_j$
($j = 1,2$)
are domains in the~class
$\mathcal P_2$
such that there exists a homeomorphism
$f : \fr U_1 \to \fr U_2$
of the boundary
$\fr U_1$
of
$U_1$
onto the boundary
$\fr U_2$
of
$U_2$
preserving the relative conformal moduli of boundary condensers)
by the relation
$U_1 = \Phi(V)$,
where
$\Phi : \bar{\mathbb R}^n \to \bar{\mathbb R}^n$
is a homeomorphism satisfying condition ${\rm(iii)}$ in the
definition of
$\mathcal P_2$,
we first choose a~sequence
$\{W_{\nu}\}_{\nu = 1,2,\dots}$
of neighborhoods
$W_{\nu}$
of~$E$
such that
$W_{\nu + 1} \subset W_{\nu}$,
the set
$\fr V \setminus W_{\nu}$
is connected
($\nu = 1,2,\dots$),
and
$\bigcup\limits_{\nu = 1}^{\infty} W_{\nu} = E$.
Then, acting as in the proof of
Theorem~\ref{t2.1}, we establish that for each
$\nu = 1,2,\dots$,
there exists a similarity transformation
$P_{\nu} : \mathbb R^n \to \mathbb R^n$
such that
$f|_{\Phi(\{\fr V\} \setminus W_{\nu})} =
P_{\nu}|_{\Phi(\{\fr V\} \setminus W_{\nu})}$.
Finally, letting
$\nu$
tend to
$\infty$,
we obtain Theorem~\ref{t2.2}. The details of the argument are left to the~reader.
\end{proof}

\section{Boundary Values of Isometric Mappings and the $p$-Moduli
of Path Families}\label{s3}

The facts of the theory of quasi-isometric mappings stated in the
Sec.~1 and Theorems~\ref{t2.1} and~\ref{t2.2} lead to the
following question: Do there exist analogs of Theorems~\ref{t2.1}
and~\ref{t2.2} characterizing the boundary values of isometric
mappings in terms of
$p$-moduli
of path families? At present, we can give the following answer to
this question:

\begin{corollary}\label{c3.1}{\rm(of Theorems~\ref{t2.1}
and~\ref{t2.2})}
Let
$n \ge 4$.
Suppose that
$U_1$
and
$U_2$
are bounded domains of class
$\mathcal P_1$
$(\mathcal P_2)$
having connected boundaries for which there exist a homeomorphism
$f : \fr U_1 \to \fr U_2$
of the boundaries of these domains and a number
$p \in \{]1,n[ \cup ]n,\infty[\}$
such that the following conditions hold{\rm:}
$f$
preserves both the relative
$n$-moduli
and the relative
$p$-moduli
of boundary condensers. Then there exists an isometry
$H : \mathbb R^n \to \mathbb R^n$
satisfying the condition
$H(U_1) = U_2$.
\end{corollary}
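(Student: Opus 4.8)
The plan is to deduce the corollary from Theorems~\ref{t2.1} and~\ref{t2.2} by exploiting the fact that the $p$-modulus with $p \ne n$ is \emph{not} invariant under homotheties, which is exactly the residual freedom those theorems leave undetermined. First I would apply Theorem~\ref{t2.1} (for domains of class $\mathcal P_1$) or Theorem~\ref{t2.2} (for $\mathcal P_2$), using \emph{only} the hypothesis that $f$ preserves the relative $n$-moduli of boundary condensers. This yields a similarity transformation $P : \mathbb R^n \to \mathbb R^n$ with $U_2 = P(U_1)$ and $f = P|_{\fr U_1}$. It then remains to show that the similarity ratio of $P$ equals $1$, so that $P$ is an isometry and one may simply take $H = P$.

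Next I would record the behaviour of the $p$-modulus under $P$. Writing $P = T \circ S_\lambda$, where $S_\lambda(x) = \lambda x$ $(\lambda > 0)$ is the homothety whose ratio is that of $P$ and $T$ is an isometry, I note that $T$ preserves the $p$-modulus of every path family (it preserves both arclength and $n$-dimensional Lebesgue measure), while a direct change of variables $x = \lambda^{-1}y$ in~(\ref{eq1.4})---using that $\rho \mapsto \lambda^{-1}\rho(\lambda^{-1}\cdot)$ carries functions admissible for $\Gamma$ to functions admissible for $S_\lambda(\Gamma)$---gives the exact scaling law
\begin{equation*}
M_p(P(\Gamma)) = \lambda^{\,n - p}\, M_p(\Gamma)
\end{equation*}
for every family $\Gamma$ of paths. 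In particular, for $p = n$ this reproduces the conformal invariance that leaves $\lambda$ undetermined in Theorems~\ref{t2.1} and~\ref{t2.2}, whereas for $p \ne n$ the factor $\lambda^{\,n - p}$ is nontrivial.

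I would then fix one boundary condenser $F = \{F_1,F_2\}$ of $U_1$ whose components are disjoint nondegenerate continua in $\fr U_1$. Since $U_1$ is bounded, such $F$ satisfies $0 < M^{U_1}_p(F) < \infty$: finiteness follows by taking $\rho_0 = d^{-1}\mathbf 1_B$, where $d = \dist(F_1,F_2) > 0$ and $B \supset \cl U_1$ is a ball, as an admissible function, and positivity is the standard capacity estimate for a ring-shaped condenser joining two nondegenerate continua at positive distance. Because $f = P|_{\fr U_1}$, we have $f(F) = P(F)$ and $\Gamma_{f(F_1),f(F_2),U_2} = P\bigl(\Gamma_{F_1,F_2,U_1}\bigr)$, so the scaling law together with the hypothesis that $f$ preserves relative $p$-moduli yields
\begin{equation*}
M^{U_1}_p(F) = M^{U_2}_p(f(F)) = \lambda^{\,n - p}\, M^{U_1}_p(F).
\end{equation*}
Dividing by $M^{U_1}_p(F) \in \,]0,\infty[$ gives $\lambda^{\,n - p} = 1$, and since $p \ne n$ this forces $\lambda = 1$. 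Hence $P$ is an isometry and $H = P$ satisfies $H(U_1) = U_2$.

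The proof is genuinely a corollary: the entire geometric difficulty is already packaged in Theorems~\ref{t2.1} and~\ref{t2.2}, which produce the similarity $P$. The only new point---and hence the one to treat with care---is the exact homothety scaling law for $M_p$ and the observation that it degenerates precisely at $p = n$; this is what lets the supplementary $p$-modulus hypothesis eliminate the leftover scaling freedom. The sole routine verification is the existence of a condenser with positive finite $p$-modulus, which I expect to be immediate for bounded domains of these classes.
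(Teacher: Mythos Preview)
Your proposal is correct and follows essentially the same route as the paper's own proof: apply Theorem~\ref{t2.1} (or~\ref{t2.2}) to obtain a similarity, then use the scaling law $M_p(P(\Gamma)) = \lambda^{\,n-p}M_p(\Gamma)$ together with a boundary condenser of positive finite relative $p$-modulus to force $\lambda = 1$. The one place where the paper supplies more detail is the existence of such a condenser, which it proves via the explicit cylindrical construction of Lemma~\ref{l3.1} rather than via the ``standard capacity estimate'' you invoke for positivity.
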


\begin{proof}[Proof]
The proof of the corollary is based on Theorems~\ref{t2.1}
and~\ref{t2.2} and Lemma~\ref{l3.1} which will be formulated
immediately after the proof of Corollary~\ref{c3.1}.

The hypothesis of the corollary and Theorem~\ref{t2.1}
(Theorem~\ref{t2.2}) imply the existence of an affine conformal
mapping
$H : \mathbb R^n \to \mathbb R^n$
such that
$U_2 = H(U_1)$
and
$H|_{\fr U_1} = f$.
Nevertheless, if
$H$
is not an isometry then it has the form
$H(x) = \varkappa \Omega x + \nu$
($x \in \mathbb R^n$),
where
$0 < \varkappa \ne 1$,
$\nu$
is the fixed point of
$\mathbb R^n$
and
$\Omega$
is an orthogonal mapping. By Lemma~\ref{l3.1}, there exists a
ring-shaped boundary condenser
$F$
of~$U_1$
satisfying the condition
$0 < M^{U_1}_p(F) < \infty$.
Furthermore, Theorem~8.2 in~\cite{V} immediately implies the following
assertion: \textit{if
$\varkappa > 0$
and
$G : \mathbb R^n \to \mathbb R^n$
is an affine conformal mapping{\rm,} i.e.{\rm,} a~mapping
defined by the relation
$G(x) = \varkappa \Omega x + \nu$
{\rm(}$\Omega$
is as above an orthogonal mapping{\rm)} then
$M_p(G(\Gamma)) = \varkappa^{n - p} M_p(\Gamma)$
for any path family
$\Gamma$}.
Using this assertion and considerations from Sec.~1, we get the
relation
\begin{equation}\label{eq4.1}
M^{U_2}_p(f(F)) = M^{U_2}_p(H(F)) = \varkappa^{n - p}M^{U_1}_p(F)
\end{equation}
which is a contradiction to the facts that
$f$
preserves the relative
$p$-moduli
of boundary ring-shaped condensers and
$\varkappa^{n - p} \ne 1$
in~(\ref{eq4.1}) since
$0 < \varkappa \ne 1$.
The corollary is proved.
\end{proof}

\begin{lemma}\label{l3.1}
Assume that a domain
$U$
is bounded and belongs to the class
$\mathcal P_1$
$(\mathcal P_2)$.
Then there exists a ring-shaped boundary condenser
$F$
of~$U$
such that
$0 < M^U_p(F) < \infty$
for every
$p \in [1,\infty[$.
\end{lemma}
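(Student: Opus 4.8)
The plan is to produce a single ring-shaped condenser that works simultaneously for all $p$, built from two small flat disks lying on one $(n-1)$-face of $\fr U$; the upper bound is then elementary and the lower bound carries all the difficulty. Concretely, I would fix an $(n-1)$-face $s$ of $\fr U$ lying in a hyperplane and, invoking Lemma~\ref{l2.1} together with an auxiliary similarity (which only rescales $M^U_p$ by a positive finite factor, by the affine-conformal scaling recalled in Section~\ref{s1}), arrange that $s \subset \tau_{n-1} = \{x \in \mathbb R^n : x_n = 0\}$ and that $\inter s$ has a neighborhood $D$ with $D \cap \mathbb R^n_+ \subset U$. Choosing $c \in \inter s$ and $\epsilon > 0$ so small that the half-ball $B^+ = B(c,4\epsilon) \cap \mathbb R^n_+ \subset D \cap \mathbb R^n_+ \subset U$ and $\{x \in \tau_{n-1} : |x-c| < 4\epsilon\} \subset \inter s$, I would take $F_1, F_2 \subset \tau_{n-1}$ to be two disjoint closed $(n-1)$-disks of radius $\epsilon$ centered at points $a,b$ with $|a-b| = 3\epsilon$, so that $d := \dist(F_1,F_2) = \epsilon > 0$. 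Each $F_i$ is a nondegenerate continuum lying in $\fr U$, so $F = \{F_1,F_2\}$ is a ring-shaped boundary condenser. For a domain of class $\mathcal P_2$ one proceeds identically on the interior of a flat face $s_j$, where the defining map $\Phi$ is affine and hence $\Phi(\inter s_j)$ is flat and carries a collar $\Phi(D \cap \mathbb R^n_+) \subset U$ that is bi-Lipschitz to a half-ball; the two-sided distortion of the $p$-modulus furnished by Theorem~\ref{t1.2} then transports both estimates below.

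For finiteness I would use a single constant density. Since $U$ is bounded, $\cl U$ is contained in some ball $B$; every $\gamma \in \Gamma_{F_1,F_2,U}$ satisfies $\Im \gamma \subset B$ and $l(\gamma) \ge \dist(F_1,F_2) = d$, so $\rho_0 = d^{-1}\mathbf 1_B$ is admissible and $M^U_p(F) \le \int_{\mathbb R^n}\rho_0^p\,dx = d^{-p}\mes(B) < \infty$ for every $p \in [1,\infty[$.

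The heart of the matter is positivity. I would pass to the subfamily $\Gamma_0 \subseteq \Gamma_{F_1,F_2,U}$ of those paths whose images lie in the half-ball $B^+ \subset U$; since any density admissible for $\Gamma_{F_1,F_2,U}$ is admissible for $\Gamma_0$, one has $M^U_p(F) \ge M_p(\Gamma_0)$, and it suffices to bound $M_p(\Gamma_0)$ away from $0$. Here $F_1, F_2$ are disjoint nondegenerate continua lying on the hyperplane $\tau_{n-1}$ bounding the half-ball, so $\Gamma_0$ is a ring-type family and, by the identification of the relative $p$-modulus with the relative $p$-capacity for ring condensers (Remark~2.8 and~\cite{Ge4}), $M_p(\Gamma_0)$ equals the $p$-capacity of the nondegenerate condenser $(F_1,F_2)$ in $B^+$. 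The main obstacle is precisely this lower bound: because the paths are confined to one side of $\tau_{n-1}$, one cannot simply quote the classical positivity of a nondegenerate condenser in all of $\mathbb R^n$, since the restricted family could a priori be thinner. I would remove the confinement by even reflection across $\tau_{n-1}$: extending an admissible density $\rho$ for $\Gamma_0$ to $\tilde\rho(x',x_n) = \rho(x',|x_n|)$ makes it admissible (after the routine approximation that moves interior contacts with $\tau_{n-1}$ slightly into $U$) for the family $\hat\Gamma$ connecting $F_1$ and $F_2$ in the full ball $B(c,4\epsilon)$, whence $M_p(\hat\Gamma) \le \int_{\mathbb R^n}\tilde\rho^p\,dx = 2\int_{\{x_n>0\}}\rho^p\,dx$ and therefore $M_p(\Gamma_0) \ge \tfrac12\,M_p(\hat\Gamma)$, the latter being the positive $p$-capacity of a nondegenerate condenser. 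As a consistency check, for $p > n$ positivity is automatic because even points carry positive $p$-capacity, while the case $p = n$ is exactly the half-space estimate of Lemma~\ref{l2.3}; the genuinely delicate range is $1 \le p < n$, where the nondegeneracy of both plates is indispensable. Combining the three steps yields $0 < M^U_p(F) < \infty$ for all $p \in [1,\infty[$, as required.
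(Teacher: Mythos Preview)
Your approach is sound and the upper bound matches the paper's, but the construction and the lower-bound argument are genuinely different. The paper does not place both plates on one face: it fixes a ball $B(x_0,r)$ with $\cl B(x_0,r)\subset U$, runs the vertical cylinder $T$ over the corresponding $(n-1)$-disk, and takes $F_1,F_2$ to be the closures of the sets of lower and upper endpoints on $\fr U$ of the vertical segments in $T\cap U$ that meet $B$. Passing to a thinner cylinder $T^*$ so that each $F_j^*=F_j\cap\cl T^*$ lies in a single hyperplane (here the polyhedral hypothesis is used), the family $\Gamma_{F_1^*,F_2^*,U}$ minorizes the family $\Gamma^*$ of full-height vertical segments over $T^*$, and the explicit cylinder formula from~\cite{V} gives $M_p^U(F^*)\ge M_p(\Gamma^*)=r_*^{\,n-1}v_{n-1}/(b_n^*-a_n^*)^{p-1}>0$. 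This is entirely elementary and self-contained; your argument instead outsources positivity to a Loewner-type fact about nondegenerate condensers in a half-ball.

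One concrete caution on your reflection step: as written it has a gap. The even extension $\tilde\rho$ need not be admissible for $\hat\Gamma$, because $\hat\Gamma$ contains rectifiable paths lying on $\tau_{n-1}$ (or with positive length there), and nothing prevents an admissible $\rho$ for $\Gamma_0$ from vanishing on that measure-zero set; for such paths $\int_{\hat\gamma}\tilde\rho=0$. The ``routine approximation'' you invoke does not rescue this, since line integrals of a merely Borel density are not continuous under perturbation of the path. The conclusion $M_p(\Gamma_0)>0$ is nonetheless correct: a clean route is to map the half-ball $B^+$ bi-Lipschitzly onto a full ball and combine Theorem~\ref{t1.2} with the standard positivity of a nondegenerate ring condenser there, or simply to quote a half-space $p$-Loewner inequality directly rather than deriving it by reflection.
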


\begin{proof}[Proof]
Consider a ball
$B = B(x_0,r)$
satisfying the condition
$\cl B(x_0,r) \subset U$
and then the set
$A = T \cap U$,
where
$T = \{x \in \mathbb R^n :
\sum\limits_{\nu = 1}^{n - 1}(x_{\nu} - x_{0 \nu})^2 < r^2\}$.
Let
$\Gamma$
be the family of all paths
$\gamma : [0,1] \to \cl A$
such that
$$
\gamma(t) = \sum_{\nu = 1}^{n - 1} \alpha_{\nu}(\gamma) e_{\nu} +
\{b_n(\gamma)t + a_n(\gamma)(1 - t)\}e_n,
$$
where
$a_n(\gamma) < b_n(\gamma)$;
$\gamma(0),\gamma(1) \in \fr U$;
$\gamma(t) \in U$
if
$t \in ]0,1[$;
finally,
$B \cap \Im \gamma \ne\varnothing$.
Starting from~$\Gamma$,
construct the boundary condenser
$F = \{F_1,F_2\}$
by setting
$F_1 =
\cl \{\sum\limits_{\nu = 1}^{n - 1} \alpha_{\nu}(\gamma)e_{\nu} +
a_n(\gamma)e_n : \gamma \in \Gamma\}$
and
$F_2 =
\cl \{\sum\limits_{\nu = 1}^{n - 1} \alpha_{\nu}(\gamma)e_{\nu} +
b_n(\gamma)e_n : \gamma \in \Gamma\}$.
Clearly,
\begin{equation}\label{eq4.2}
\inf_{\gamma \in \Gamma_{F_1,F_2,U}} l(\gamma) = \lambda > 0.
\end{equation}
Recalling also that
$U \in \mathcal P_1$
($\in \mathcal P_2$),
it is easy to verify the existence of a cylinder
$T^* = \{x \in \mathbb R^n :
\sum\limits_{\nu = 1}^{n - 1} (x_{\nu} - x_{* \nu})^2 < r^2_*\}$
satisfying the following conditions: {\rm(i)}
$\cl T^* \subset T$
and {\rm(ii)}
$F^*_j = F_j \cap \cl T^*$
is a subset of a certain hyperplane
$\tau_{n - 1,j}$
($j = 1,2$).
We assert that the ring-shaped boundary condenser
$F^* = \{F^*_1,F^*_2\}$
is a desired one.

Indeed, by~(\ref{eq4.2}) and Theorem~7.1 in~\cite{V},
$$
M^U_p(F^*) \le \frac{\mes U}{\lambda^p} < \infty.
$$
On the other hand, the boundedness of~$U$
implies the existence of numbers
$a^*_n,b^*_n$
($0 < a^*_n < b^*_n < \infty$)
having the following properties:
$a^*_n \le a_n(\gamma^*) < b_n(\gamma^*) \le b^*_n$
for every path
$\gamma^* : [0,1] \to \mathbb R^n$
of the form
\begin{equation}\label{eq4.3}
\gamma^*(t) = \sum_{\nu = 1}^{n - 1} \alpha_{\nu} e_{\nu} +
\{b^*_n t + a^*_n (1 - t)\}e_n, \quad t \in [0,1],
\end{equation}
where
$\sum\limits_{\nu = 1}^{n - 1} (\alpha_{\nu} - x_{* \nu})^2 <
r^2_*$.
The family
$\Gamma_{F^*_1,F^*_2,U}$
minorizes the family
$\Gamma^*$
of all paths having the form~(\ref{eq4.3}).
Hence, assertions~6.4 and~7.2 in~\cite{V} imply the relations
$$
M^U_p(F^*) \ge M_p (\Gamma^*) = \frac{r^{n - 1}_* v_{n - 1}}
{(b^*_n - a^*_n)^{p - 1}} > 0.
$$
The lemma is proved.
\end{proof}
\medskip

\section{Appendix}\label{s4}

\begin{proof}[Proof of Theorem~{\rm\ref{t1.2}}]
The proof of this theorem follows the lines of the
proof of the second claim of Theorem~6.5 in~\cite{V1}.
Therefore, we will expose it briefly.

Let
$\Gamma$
be a family of paths in the domain
$U_1$
(i.e., of paths
$\gamma : [a,b] \to \mathbb R^n$
such that
$\Im \gamma \subset U_1$).
Consider the subfamily
$\Gamma^*$
of~$\Gamma$
consisting of all locally rectifiable paths
$\gamma \in \Gamma$
such that
$f$
is absolutely continuous on every closed subpath of
$\gamma$.
Since
$f$
is a quasi-isometry,
$f \in ACL_p$
for all $p > 1$
(see, for example,~\cite{Fu},~\cite{V1} for
the definition of the class $ACL_p$); therefore,
$M_p(\Gamma_0) = 0$
for the family
$\Gamma_0$
of all locally rectifiable paths in
$U_1$
having subpaths on which the mapping
$f$
is not absolutely continuous~(\cite{Fu}). The fact that
$\Gamma \setminus \Gamma^* \subset \Gamma_0$
and the properties of moduli imply the equality
$M_p(\Gamma \setminus \Gamma^*) = 0$.
Consequently,
$M_p(\Gamma^*) = M_p(\Gamma)$.
Therefore, for proving, for example, the left-hand inequality
in~(\ref{eq1.3}), which we will do below, it suffices to show that
$M_p(\Gamma^*) \le K^{p + n - 2}M_p(f(\Gamma))$.

Let
$E$
be a Borel subset in~$U_1$ that contains all points
$x \in U_1$
at which~$f$
is not differentiable and all those points~$x$
in~$U_1$
at which
$f$
is differentiable but the Jacobian
$J(x,f) = 0$,
moreover,
$\mes E\,\, (= \mes_n E) = 0$.
Here we use the facts that a quasi-isometric mapping is
quasiconformal and the set of points of nondegenerate
differentiability of a quasiconformal mapping is a set of full
measure with respect to its domain of definition.

Assume that
$\widetilde{\rho} \in \mathcal R(f(\Gamma^*))$,
i.e.,
$\int_{\widetilde{\gamma}} \widetilde{\rho}(x) ds \ge 1$
for every locally rectifiable path
$\widetilde{\gamma} \in f(\Gamma^*)$.
Define a~function
$\rho : \mathbb R^n \to \mathbb R^n$
by setting
$\rho(x) = \widetilde{\rho}(f(x)) ||f'(x)||$
if
$x \in U_1 \setminus E$,
$\rho(x) = \infty$
if
$x \in E$,
and
$\rho(x) = 0$
if
$x \in \mathbb R^n \setminus U_1$.
Arguing as in the proof of the second part of Theorem~6.5
in~\cite{V1} (or of Theorem~32.3 in~\cite{V}, which is the
$n$-dimensional
variant of the first theorem), we further infer that
$\rho \in \mathcal R(\Gamma^*)$, and hence
\begin{multline}\label{eq4.1}
M_p(\Gamma) = M_p(\Gamma^*) \le \int_{\mathbb R^n} \rho^p dx =
\int_{U_1}[\widetilde{\rho}(f(x))]^p ||f'(x)||^p dx =
\\
\int_{U_1}[\widetilde{\rho}(f(x))]^p \frac{||f'(x)||^p}{|J(x,f)|}
|J(x,f)| dx \le
K^{p + n - 2} \int_{U_1} [\widetilde{\rho}(f(x))]^p |J(x,f)| dx =
\\
K^{p + n - 2} \int_{U_2}[\widetilde{\rho}(y)]^p dy =
K^{p + n - 2} \int_{\mathbb R^n}[\widetilde{\rho}(y)]^p dy.
\end{multline}
In~(\ref{eq4.1}), we have used the fact that, since
$f$
is a $K$-quasi-isometry, it is easy to verify the inequality
$\frac{||f'(x)||^p}{|J(x,f)|} \le K^{p + n - 2}$
for
$x \in U_1 \setminus E$.
Taking~(\ref{eq4.1}) into account and recalling that the
inverse mapping
$f^{-1}$
is also
$K$-quasi-isometric, we finally get~(\ref{eq1.3}).
\end{proof}

In conclusion, note that the main results of our article were
earlier announced in~\cite{Ko4}.
\vskip3mm

{\bf Acknowledgments}
\vskip3mm

The author was partially supported by the Russian Foundation for
Basic Research (Grant 11-01-00819-a), the Interdisciplinary
Project of the Siberian and Far-Eastern Divisions of the Russian
Academy of Sciences (2012-2014 no. 56), the State Maintenance
Program for the Leading Scientific Schools of the Russian
Federation (Grant NSh-921.2012.1) and the Exchange Program
between the Russian and Polish Academies of Sciences (Project
2014-2016).

\end{document}